\newcommand{\jac}{\text{Jac}}
\newcommand{\vertiii}[1]{{\left\vert\kern-0.25ex\left\vert\kern-0.25ex\left\vert #1 
    \right\vert\kern-0.25ex\right\vert\kern-0.25ex\right\vert}}
\def\r{{\rangle}}
\def\l{{\langle}}
\def\S{{\mathbb S}}
\def\R{{\mathbb R}}
\newtheorem{theorem}{Theorem}
\newtheorem{lemma}[theorem]{Lemma}
\newtheorem{proposition}[theorem]{Proposition}
\theoremstyle{definition}
\theoremstyle{remark}
\newtheorem{remark}[theorem]{Remark}
\numberwithin{equation}{section}
\numberwithin{theorem}{section}
\numberwithin{problem}{section}
\begin{document}

\begin{abstract}
In this paper, we give a unified treatment of the local well-posedness for the wave kinetic equation in almost critical weighted $L^r$ spaces with $2 \leq r \leq \infty.$ The proof builds on ideas from our earlier works \cite{AmLe24, AmLemain25}. Our approach is based solely on kinetic tools, with no appeal to Fourier theory. 

\end{abstract}

\title[On the optimal local well-posedness of the WKE in $L^r$]{On the optimal local well-posedness of the wave kinetic equation in $L^r$}

\author[I. Ampatzoglou]{Ioakeim Ampatzoglou}
\address{Baruch College \& The Graduate Center, City University of New York, Newman Vertical Campus, 55 Lexington Ave, New York, NY, 10010, USA}
\email{ioakeim.ampatzoglou@baruch.cuny.edu ; iampatzoglou@gc.cuny.edu}

\author[T. L\'eger ]{Tristan L\'eger}
\address{Yale University, Mathematics Department, Kline tower, New Haven, CT 06511, USA}
\email{tristan.leger@yale.edu}

\maketitle
\tableofcontents

\section{Introduction}

\subsection{Background}
In this paper we study the 3D  wave kinetic equation (WKE) with Laplacian dispersion relation
\begin{equation} \label{KWE}\tag{WKE}
\begin{cases}
\partial_t f = \mathcal{C}[f]\\
f(t=0)=f_0
\end{cases}
\end{equation}
where $f:[0,T] \times  \mathbb{R}^3 \rightarrow \mathbb{R}$, $T>0$ and $f_0:\R^3\to\R$.

The collisional operator $\mathcal{C}$ is defined as 
\begin{align} \label{collision}
\begin{split}
    \mathcal{C}[f] & := \int_{\mathbb{R}^{9}} \delta (\Sigma) \, \delta(\Omega) \, f f_1 f_2 f_3 \, \big(\frac{1}{f} + \frac{1}{f_1} - \frac{1}{f_2} - \frac{1}{f_3} \big) \,dk_1 dk_2 dk_3,  \\
    \Sigma&:=k+k_1 - k_2 - k_3, \\
    \Omega &:= \vert k \vert^2 + \vert k_1 \vert^2 - \vert k_2 \vert^2 - \vert k_3 \vert^2 .
\end{split}
\end{align}
This describes the statistical properties of a system governed by the cubic NLS equation, and is the most canonical model in the field of weak turbulence. Here the unknown $f$ corresponds to the two-point correlation function of the system. More generally, the goal of this theory is to describe the out-of-equilibrium dynamics of interacting waves. It originated in 1929 with the work of R. Peierls \cite{Pe} and in 1962 with K. Hasselman \cite{ha62,ha63} independently. It has proved very versatile and has been applied to many physical systems. For a comprehensive list of concrete examples, we refer to the textbook of S. Nazarenko \cite{Nazarenko}.

This theory has also been studied mathematically, with the question of derivation of kinetic equations from microscopic models as the main focus thus far. After works by several groups \cite{BGHS, CG1, CG2, DH1}, the optimal result was obtained by Z. Hani and Y. Deng \cite{DH2}. They derived the wave kinetic equation from the microscopic system on the optimal time scale, called the kinetic time.


We note that these results are all conditional on the wave kinetic equation having a good well-posedness theory. Indeed only smooth enough solutions to the kinetic equation can be derived from the microscopic system. The next natural step is thus to develop the local well-posedness theory of the WKE, which is the subject of the present paper. We note that weak solutions were constructed by M. Escobedo and J. Vel\'{a}zquez in \cite{EV}. We will focus on strong solutions here, since they correspond to configurations that have been shown to be derivable from many-body systems.

The well-posedness theory of strong solutions for wave kinetic equations is still in its infancy. A particularly relevant functional analytic setting to consider are scale-critical weighted Lebesgue spaces. Indeed, they contain Rayleigh-Jeans spectra given by $f(k) = \frac{1}{\mu + \vert k \vert^2},$ where $\mu$ is a free parameter. These special solutions correspond to thermodynamic equilibrium and are thus the analog of Maxwellians for the Boltzmann equation. As such, they are important to understand the long-time behavior of \eqref{KWE}. We note here that kinetic equations have other physically relevant solutions, e.g. the celebrated Kolmogorov-Zakharov spectra. The full dynamics of the equation is therefore expected to be quite complicated and different from that of the Boltzmann equation. Local results point in this direction as well: C. Collot, H. Dietert and P. Germain \cite{CollotDietertGermain24} studied the static problem near one Kolmogorov-Zakharov solution in the isotropic case, and prove that it is stable.  However, some Rayleigh-Jeans solutions have been shown to be unstable by M. Escobedo and A. Menegaki \cite{EscobedoMenegaki24}. 

Restricting our attention to local well-posedness, the state of the art result was obtained by P. Germain, A. Ionescu and M.-B. Tran \cite{GeIoTr} who prove almost critical local well-posedness in $L^2$ and $L^\infty.$ The $L^\infty$ bound is proved by direct estimation, while for $L^2$ the authors rely on Radon transform type techniques. 

In the present paper we improve on this result in two respects: first, we extend it to all almost critical weighted $L^r$ for $2 \leq r \leq \infty$. Second, we give a unified treatment of all the cases, and do not rely on multilinear interpolation to obtain the result. Indeed, our method of proof is quite different from \cite{GeIoTr}, and is inspired by our recent works on the wave kinetic and Boltzmann equations \cite{AmLe24, AmLemain25}. We rely solely on classical kinetic tools, such as Bobylev variables and angular averaging estimates. In particular, we make no appeal to Fourier theory.


\subsection{Results obtained}
We start by stating the main result of the paper.
\begin{theorem} \label{main-thm}
Let $r\geq 2$ and $0<\delta<1/r$ if $r<\infty$, $\delta > 0$ if $r = \infty.$ Let $f_0 \in \l k \r^{-2 + \frac{3}{r} -\delta} L^r.$ Then there exists $T=T(\|\l k\r^{2 - \frac{3}{r} + \delta} f_0\|_{L^r})>0$ such that the initial value problem \eqref{KWE}
has a unique strong solution $f(t) \in \mathcal{C} \big( [0,T] ; \l k \r^{-2 + \frac{3}{r} - \delta} L^r \big),$ that is
\begin{align*}
    \forall t \in [0,T], \quad f(t) = f_0 + \int_0^t  \mathcal{C}[f](s)\, ds .
\end{align*}

Moreover the solution depends continuously on the initial data: let $f_0, g_0 \in \l k \r^{-2 + \frac{3}{r} -\delta} L^r$, $T_1,T_2>0$ be the times of existence obtained above and $f,g$ be the corresponding solutions of \eqref{KWE} in $[0,T_1]$ and $[0,T_2]$ respectively. Then, there holds the estimate 
\begin{align}\label{continuity wrt data estimate}
     \sup_{t\in[0,T_{min}]}\big \Vert \l k \r^{2 - \frac{3}{r} + \delta} \big( f(t) - g(t) \big) \big \Vert_{L^r} \leq 2\big \Vert \l k \r^{2 - \frac{3}{r} + \delta} \big(f_0 - g_0 \big) \big \Vert_{L^r},
\end{align}
where $T_{min}:=\min\{T_1,T_2\}$.

Finally, the flow preserves positivity, in the sense that if $f_0 \geq 0,$ then for all $t \in [0,T], f(t) \geq 0,$ where $T$ is the time of existence corresponding to $f_0$.
\end{theorem}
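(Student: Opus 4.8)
The plan is to solve the integral equation by a contraction mapping argument in the Banach space $X_T:=\mathcal{C}\big([0,T];\langle k\rangle^{-2+\frac3r-\delta}L^r\big)$ with norm $\|f\|_{X_T}:=\sup_{t\in[0,T]}\|\langle k\rangle^{2-\frac3r+\delta}f(t)\|_{L^r}$, the only serious ingredient being a weighted trilinear estimate. Expanding the bracket in \eqref{collision}, the operator $\mathcal{C}$ is cubic, $\mathcal{C}[f]=\mathcal{C}[f,f,f]$, where $\mathcal{C}[\cdot,\cdot,\cdot]$ is the trilinear form associated with $f\mapsto\int_{\mathbb{R}^9}\delta(\Sigma)\delta(\Omega)\big(f_1f_2f_3+ff_2f_3-ff_1f_2-ff_1f_3\big)$; moreover it has the gain--loss splitting $\mathcal{C}[f]=G[f]-f\,L[f]$, where $G[f](k)=\int_{\mathbb{R}^9}\delta(\Sigma)\delta(\Omega)f_2f_3(f+f_1)$ and $L[f](k)=\int_{\mathbb{R}^9}\delta(\Sigma)\delta(\Omega)f_1(f_2+f_3)$, which we only need for positivity. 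The crux is to show that the integral defining $\mathcal{C}[f,g,h](k)$ converges absolutely for a.e.\ $k$ and that
\begin{equation}\label{eq:triest-prop}
\Big\|\langle k\rangle^{2-\frac3r+\delta}\mathcal{C}[f,g,h]\Big\|_{L^r_k}\lesssim\big\|\langle k\rangle^{2-\frac3r+\delta}f\big\|_{L^r}\big\|\langle k\rangle^{2-\frac3r+\delta}g\big\|_{L^r}\big\|\langle k\rangle^{2-\frac3r+\delta}h\big\|_{L^r},
\end{equation}
uniformly in $2\le r\le\infty$, together with the analogous bilinear bound for $L[\cdot]$.

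For \eqref{eq:triest-prop} I would integrate out $k_3=k+k_1-k_2$ using $\delta(\Sigma)$; the remaining delta becomes $\delta(\Omega)$ with $\Omega=-2(k_2-k)\cdot(k_2-k_1)$, which forces $k_2$ onto the sphere of diameter $[k,k_1]$. Parametrizing it by $\omega\in\mathbb{S}^2$ through $k_2=\tfrac{k+k_1}{2}+\tfrac{|k-k_1|}{2}\omega$, $k_3=\tfrac{k+k_1}{2}-\tfrac{|k-k_1|}{2}\omega$, one reduces to $\mathcal{C}[f,g,h](k)\sim\int_{\mathbb{R}^3}|k-k_1|\,dk_1\int_{\mathbb{S}^2}d\omega\,(\text{products of }f,g,h\text{ evaluated at }k,k_1,k_2,k_3)$. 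From here \eqref{eq:triest-prop} follows, as in \cite{AmLe24,AmLemain25}, by distributing the $L^r$ norm with Hölder's inequality, controlling the spherical integrals $\int_{\mathbb{S}^2}|\phi(\tfrac{k+k_1}{2}\pm\tfrac{|k-k_1|}{2}\omega)|\,d\omega$ via angular averaging (Bobylev-type) estimates, and finally carrying out a weighted integration in $k_1$ in which the weights $\langle\cdot\rangle^{2-\frac3r+\delta}$, the factor $|k-k_1|$, and the angular gain just close the integral; the surplus power $\delta$ is exactly what forces this convergence, with threshold $\delta<1/r$. This is the main obstacle: it replaces the Radon-transform arguments of \cite{GeIoTr} by classical kinetic tools and must be run uniformly in $r$, the endpoint $r=2$ (lightest weight) being hardest and $r=\infty$ essentially a pointwise bound.

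Granting \eqref{eq:triest-prop}, the remainder is standard. Trilinearity gives $\|\langle k\rangle^{2-\frac3r+\delta}(\mathcal{C}[f]-\mathcal{C}[g])\|_{L^r}\lesssim(\|f\|_{X_T}^2+\|g\|_{X_T}^2)\|f-g\|_{X_T}$, so on the ball $B_R=\{f\in X_T:\|f\|_{X_T}\le R\}$ with $R:=2\|\langle k\rangle^{2-\frac3r+\delta}f_0\|_{L^r}$ the Duhamel map $\Phi[f](t):=f_0+\int_0^t\mathcal{C}[f](s)\,ds$ obeys $\|\Phi[f]\|_{X_T}\le\tfrac R2+CTR^3$ and $\|\Phi[f]-\Phi[g]\|_{X_T}\le CTR^2\|f-g\|_{X_T}$; choosing $T=T(R)$ with $CTR^2\le\tfrac12$ makes $\Phi$ a contraction of $B_R$, whose unique fixed point is the strong solution (continuity in $t$ and the $\mathcal{C}^1$ property in time following again from \eqref{eq:triest-prop}), and the usual continuation argument upgrades uniqueness to all of $X_T$. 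For continuous dependence, the difference $d=f-g$ of two solutions on $[0,T_{min}]$ satisfies $\|\langle k\rangle^{2-\frac3r+\delta}d(t)\|_{L^r}\le\|\langle k\rangle^{2-\frac3r+\delta}(f_0-g_0)\|_{L^r}+C\big(\!\max(\|f\|_{X_{T_{min}}},\|g\|_{X_{T_{min}}})\big)^2\int_0^t\|\langle k\rangle^{2-\frac3r+\delta}d(s)\|_{L^r}\,ds$; since the construction makes $T_{min}$ small relative to these norms, a Gronwall step yields \eqref{continuity wrt data estimate} with constant $2$.

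Finally, for positivity, assume $f_0\ge0$ and reformulate \eqref{KWE} via the integrating factor of the loss term: on $[0,T]$, $f$ solves \eqref{KWE} iff $f(t)=e^{-\int_0^tL[f](s)ds}f_0+\int_0^te^{-\int_s^tL[f](\tau)d\tau}G[f](s)\,ds$. Running the Picard iteration for this reformulation from $f^{(0)}=f_0$ and using $0\le e^{-\int L[f^{(n)}]}\le1$ (valid once $L[f^{(n)}]\ge0$) together with the trilinear bound for $G$ and the bilinear bound for $L$, the same estimates give convergence in $X_{T'}$ for some $0<T'\le T$ to a solution, which by uniqueness equals $f$ there, and iterating in time reaches $[0,T]$. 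The induction closes because if $f^{(n)}(t)\ge0$ for all $t$ then $G[f^{(n)}](t,k)\ge0$ and $L[f^{(n)}](t,k)\ge0$ pointwise---their integrands are products of nonnegative quantities against $\delta(\Sigma)\delta(\Omega)$---so $f^{(n+1)}$, being for each fixed $k$ the solution of the scalar linear ODE $\dot u=G[f^{(n)}](t,k)-u\,L[f^{(n)}](t,k)$ with $u(0)=f_0(k)\ge0$ and nonnegative source and damping, stays $\ge0$; since $f^{(0)}=f_0\ge0$ this propagates to all $n$, and the limit inherits $f(t)\ge0$ for every $t\in[0,T]$.
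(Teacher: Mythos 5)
Your overall architecture---hard-sphere parametrization of the resonant manifold, a moment-preserving weighted trilinear estimate, then a contraction in $\mathcal{C}\big([0,T];\langle k\rangle^{-l}L^r\big)$---is the same as the paper's, but the entire mathematical content of the theorem is the trilinear estimate itself, and you do not prove it: you assert that it ``follows, as in \cite{AmLe24,AmLemain25}'' after H\"older, angular averaging and a weighted $k_1$-integration. Those references do not contain this bound (they concern weighted $L^\infty$ settings for the inhomogeneous WKE and convolution estimates for the Boltzmann gain operator); establishing it in almost-critical weighted $L^r$ for all $2\le r\le\infty$ is exactly the content of Sections \ref{sec:gain} and \ref{sec:loss}, and your sketch omits the structure that makes it work. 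Concretely: one must first split the operator, via the symmetry $\sigma\mapsto-\sigma$ and the cutoff $\chi(\widehat w\cdot\sigma)$, into $\mathcal{G}_0,\mathcal{G}_1,\mathcal{L}_0,\mathcal{L}_1$ according to whether the ``spectator'' factor is evaluated at $k$ or at $k_1$. The terms carrying $f(k)$ ($\mathcal{G}_0,\mathcal{L}_0,\mathcal{L}_1$) are handled by an $L^\infty$ bound on the remaining bilinear expression, which requires splitting the weight as $l_0+l_1=1$ with $l_0=\frac{1-\delta}{2}$, $l_1=\frac{1+\delta}{2}$, inserting singular factors $|\widehat R_\sigma^-(w)\cdot\sigma|^{\alpha}$ into Cauchy--Schwarz, and invoking the pointwise lower bounds \eqref{k^* lower bound by E}--\eqref{lower bound on v} to absorb the cross-section $|w|\lesssim E^{1/2}$ when the large frequency does not sit on $k^*,k_1^*$; the term $\mathcal{G}_1$, where $f(k_1)$ appears, is of a different nature and needs H\"older with exponents $(r,r')$, the averaging bound \eqref{estimate on integral of v1}, and the involutive pre/post-collisional change of variables \eqref{change of variables formula}---and it is only there that the hypothesis $\delta<1/r$ enters, through the condition $r'l<2$ in \eqref{r'l<2}. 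Your sketch offers no mechanism producing this threshold, and your remark that $r=2$ is the hardest endpoint is off the mark: in the paper every finite $r$ is reduced to $L^2$-based bounds through the embedding \eqref{embedding L2 to Lr}. Without this decomposition and exponent bookkeeping, the central estimate is unsupported, so the proposal has a genuine gap at its key step.

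The remaining steps are essentially fine. The contraction argument matches the paper; for continuous dependence the paper absorbs the factor $\tfrac12$ directly rather than using Gronwall, but with $T\approx(CR^2)^{-1}$ your Gronwall variant gives $e^{1/2}<2$, which suffices for \eqref{continuity wrt data estimate}. Your positivity argument is genuinely different from the paper's: instead of the Kaniel--Shinbrot scheme (monotone barriers $l_n\le u_n$ initialized with $0$ and the gain-only solution), you iterate the exponential (integrating-factor) form of the equation; this is a legitimate alternative, since $\|\mathcal{R}[f]\|_{L^\infty}\lesssim\|\langle k\rangle^l f\|_{L^r}^2$ makes the exponential map a contraction on the positive cone of the ball (using $|e^{-a}-e^{-b}|\le|a-b|$ for $a,b\ge0$), positivity of the iterates passes to the limit, and the a priori bound $\sup_{[0,T]}\|\langle k\rangle^l f(t)\|_{L^r}\le 2R$ lets you restart finitely many times to cover $[0,T]$. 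But all of this rests on the trilinear and bilinear bounds you have not established.
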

\begin{remark}
The case $\delta = 0$ corresponds to the critical exponent. 
\end{remark}
\begin{remark}
In the special cases $r = 2, r = \infty,$ we recover the results of \cite{GeIoTr}. Of course, it is possible to obtain the intermediate cases from their bounds by multilinear interpolation.
\end{remark}
\begin{remark}
As will be clear in the proof, all the cases $2 \leq r \leq \infty$ are treated in a unified way.
\end{remark}

\begin{remark}
The proof yields a more quantitative estimate for the lifespan of the solution. More precisely, it shows that $T \gtrsim \|\l k\r^{2 - \frac{3}{r} + \delta} f_0\|_{L^r}^{-2}$.
\end{remark}

\color{black}

\subsection{Parametrization of the resonant manifolds} 
For Laplacian dispersion relation, there is a particularly simple parametrization of the collisional kernel as a hard-sphere quantum Boltzmann-type operator. In other words, the wave interaction of the modes $k,k_1,k_2,k_3$ can be identified with the elastic collision of two particles with pre-collisional velocities $k,k_1$ and post-collisional velocities $k_2,k_3$. At the level of the corresponding particle interaction, the resonant conditions $\Sigma=0$ and $\Omega=0$ correspond to the conservation of momentum and energy respectively.

More precisely, consider $F:\R^{12}\to\R$   continuously differentiable and compactly supported, \color{black} and denote
$$I_F(k):=\int_{\R^{9}}\delta(\Sigma)\delta(\Omega)F(k,k_1,k_2,k_3)\,dk_1\,dk_2\,dk_3.$$

Recalling  $\Sigma=k+k_1-k_2-k_3$, $\Omega=|k|^2+|k_1|^2-|k_2|^2-|k_3|^2$, and using Fubini's theorem and the co-area formula, we can write
\begin{align*}
I_F(k)&=\int_{\R^3}\left(\int_{\R^3}\delta(\Omega_{k,k_1}(k_2))F(k,k_1,k_2,k+k_1-k_2)\,dk_2\right)\,dk_1\\
&=\int_{\R^3}\int_{\Omega_{k,k_1}=0}\frac{1}{|\nabla_{k_2}\Omega_{k,k_1}(k_2)|}F(k,k_1,k_2,k+k_1-k_2)\,d\sigma_{k,k_1}(k_2)\,dk_1,
\end{align*}
where 
\begin{align*}
\Omega_{k,k_1}(k_2)&=|k_2|^2+|k+k_1-k_2|^2-|k|^2-|k_1|^2=2\left(|k_2-K|^2-\frac{|w|^2}{4}\right),\\
K&:=\frac{k+k_1}{2},\quad w:=k-k_1,
\end{align*}
and $\,d\sigma_{k,k_1}$ denotes the surface measure on the surface $\Omega_{k,k_1}=0$. As a result, the surface $\Omega_{k,k_1}=0$ can be parametrized by 
\begin{equation*}
k_2=K-\frac{|w|}{2}\sigma,\quad \sigma\in\S^2,\quad d\sigma_{k,k_1}(k_2)=\frac{|w|^2}{4}\,d\sigma.   
\end{equation*}
Finally on the surface $\Omega_{k,k_1}=0$, we readily compute
\begin{align*}
|\nabla_{k_2}\Omega_{k,k_1}(k_2)|=4|k_2-K|=2|w|.    
\end{align*}
Combining these computations, we conclude that

\begin{equation}\label{IF parametrized}
I_F(k)=\frac{1}{8}\int_{\R^3\times\S^2}|w| F(k,k_1,k^*,k_1^*)\,d\sigma\,dk_1,
\end{equation}
where 
\begin{equation}\label{collisional law}
\begin{cases}
k^*=K-\frac{|w|}{2}\sigma\\
k_1^*=K+\frac{|w|}{2}\sigma
\end{cases}  \quad  ,
\quad K=\frac{k+k_1}{2},\quad w=k-k_1.
\end{equation}
Due to the linear growth $|w|$ in the integrand, equation \eqref{IF parametrized} essentially shows that integration over the resonant  manifolds is equivalent to a collisional integral of interacting hard-spheres. The resulting linear growth $|w|$ is called resonant cross-section. 

This idea was first implemented in the study of wave turbulence with Laplacian dispersion relation by the first author of this paper \cite{Am}, where global existence, uniqueness and stability of mild solutions to the space inhomogeneous WKE were proved for sufficiently small exponentially decaying initial data. It was later extended in \cite{AmMiPaTa24} to small polynomially decaying initial data and to the corresponding hierarchy of equations by the first author, J.K. Miller, N. Pavlovi\'c and M. Taskovi\'c. In \cite{AmLe24}, we improved this result to translation invariant spaces in the spatial variable by introducing collisional averaging estimates combined with dispersive properties of free transport. We also obtained a full description of the asymptotic behavior of the solutions, showing that they scatter. In \cite{AmLeill25} we used similar collisional averaging estimates to show strong local well-posedness of MMT-type kinetic equations below a sharp ill-posedness threshold. Above said threshold the collisional averaging effect is absent and we showed that, coincidentally, the equations are ill-posed. Thus these collisional averaging estimates characterize completely the well-posedness of these equations. These techniques are quite versatile, and can be applied to other models. We refer for example to the work of N. Pavlovi\'c, M. Taskovi\'c and L. Velasco \cite{PaTaVe25} who constructed mild solutions that scatter for the six-wave WKE with exponentially decaying initial data.

\subsection{Collisional operators}
Using \eqref{IF parametrized}, the collisional operator can be written in gain and loss form as follows 
\begin{equation}\label{gain-loss}
\mathcal{C}[f]=\mathcal{Q}^+[f]-\mathcal{Q}^-[f],    
\end{equation}
where 
\begin{align}
\mathcal{Q^+}[f](k)&=\frac{1}{8}\int_{\R^3\times\S^2}|w|f(k^*)f(k_1^*)\big(f(k)+f(k_1)\big)\,d\sigma\,d  k \color{black} _1\label{first def of Q+},\\
\mathcal{Q}^-[f](k)&=f(k)\mathcal{R}[f](k),\label{first def of Q-}\\
\mathcal{R}[f](k)&= \frac{1}{8}\int_{\R^3\times\S^2}|w|f(k_1)\big(f(k^*)+f(k_1^*)\big)\,d\sigma\,dk_1.
\end{align}
The operator $\mathcal{R}[f]$ is called collision frequency.

Now, we use a more refined decomposition which takes advantage of the symmetry of the collisional operators. This idea was introduced in \cite{AmLe24} and then was also used in \cite{AmLeill25}. For this, let us introduce the standard cut-off function $\chi:=\mathds{1}_{(0,+\infty)}$. Noticing that the substitution $\sigma\to -\sigma$ just exchanges $k^*$ with $k_1^*$, we obtain,  denoting $w:= k-k_1$ and using the standard notation $\widehat{w} = \frac{w}{\vert w \vert}$ \color{black}
\begin{align*}
\mathcal{Q}^+[f](k)&=  \frac{1}{8}\int_{\R^3\times\S^2}|w|f(k^*)f(k_1^*)\big(f(k)+f(k_1)\big)\chi(\widehat{w}\cdot\sigma)\,d\sigma\,d  k \color{black} _1 \\
&\hspace{2cm}+ \frac{1}{8}\int_{\R^3\times\S^2}|w|f(k^*)f(k_1^*)\big(f(k)+f(k_1)\big)\chi(-\widehat{w}\cdot\sigma)\,d\sigma\,d  k \color{black} _1\\
&=\frac{1}{4}\int_{\R^3\times\S^2}|w|f(k^*)f(k_1^*)\big(f(k)+f(k_1)\big)\chi(\widehat{w}\cdot\sigma)\,d\sigma\,d  k \color{black} _1,
\end{align*}
where for the last line we used the substitution $\sigma\to-\sigma$ in the second part of the sum.

Similarly, we obtain
\begin{align*}
\mathcal{R}[f](k)&= \frac{1}{8}\int_{\R^3\times\S^2}|w|f(k_1)\big(f(k^*)+f(k_1^*)\big)\chi(\widehat{w}\cdot\sigma)\,d\sigma\,dk_1\\
&\hspace{2cm} +\frac{1}{8}\int_{\R^3\times\S^2}|w|f(k_1)\big(f(k^*)+f(k_1^*)\big)\chi(-\widehat{w}\cdot\sigma)\,d\sigma\,dk_1\\
&=\frac{1}{4}\int_{\R^3\times\S^2}|w|f(k_1)\big(f(k^*)+f(k_1^*)\big)\chi(\widehat{w}\cdot\sigma)\,d\sigma\,dk_1.
\end{align*}
We conclude that we can write
\begin{align}
\label{defQ+}
    \mathcal{Q}^+[f] &:= \mathcal{G}_0[f,f,f]+\mathcal{G}_1[f,f,f], \\
    \label{defQ-} \mathcal{Q}^-[f] &:= \mathcal{L}_0[f,f,f] + \mathcal{L}_1[f,f,f],\\
    \label{defR}\mathcal{R}[f]&=\mathcal{R}_0[f,f]+\mathcal{R}_1[f,f],
\end{align}
where $\mathcal{G}_0,\mathcal{G}_1,\mathcal{L}_0,\mathcal{L}_1,\mathcal{R}_0,\mathcal{R}_1$ are the cubic/quadratic restrictions of the trilinear/bilinear operators
\begin{align} 
\label{defG0} \mathcal{G}_0[f,g,h](k)&=\frac{1}{4}f(k)\int_{\R^3\times\S^2}|w| g(k^*)h(k_1^*)\chi(\widehat{w}\cdot\sigma)\,d\sigma\,dk_1,\\
\label{defG1} \mathcal{G}_1[f,g,h](k)&=\frac{1}{4}\int_{\R^3\times\S^2}|w| f(k_1)g(k^*)h(k_1^*)\chi(\widehat{w}\cdot\sigma)\,d\sigma\,dk_1,\\ 
\label{defL0} \mathcal{L}_0[f,g,h](k)&=f(k) \mathcal{R}_0[g,h](k),\\
\label{defL1} \mathcal{L}_1[f,g,h](k)&=f(k) \mathcal{R}_1[g,h](k),\\
\label{defR0}  \mathcal{R}_0[g,h](k)&=\frac{1}{4}\int_{\R^3\times\S^2}|w| g(k_1)h(k^*)\chi(\widehat{w}\cdot\sigma)\,d\sigma\,dk_1, \\
\label{defR1}  \mathcal{R}_1[g,h](k)&=\frac{1}{4}\int_{\R^3\times\S^2}|w| g(k_1)h(k_1^*)\chi(\widehat{w}\cdot\sigma)\,d\sigma\,dk_1.
\end{align}
These multilinear operators are the fundamental objects we study to prove our main result.

\subsection{Organization of the paper}

The paper is organized as follows: in Section \ref{sec:tools} we record the technical tools used in the rest of the paper, namely Bobylev variables, basic geometric identities and key collisional averaging estimates. Then we use this toolbox to prove moment preserving trilinear estimates for the gain operators (Section \ref{sec:gain}) and the loss operators (Section \ref{sec:loss}). Finally we use these bounds to prove our main result Theorem \ref{main-thm} in Section \ref{sec:pfmain}.

\subsection{Notations} Throughout the paper, we will use the following notation:
\begin{itemize}
    \item We will write $A \lesssim B$ to mean that there exists a numerical constant $C>0$ such that $A  \leq C B.$ We will write $A \approx B$ if $A \lesssim B$ and $B \lesssim A.$ 
    \item $\chi:=\mathds{1}_{(0,+\infty)}$ will denote the standard cut-off function.
    \item Given $s\in\R$ we will denote $f_s(k):=\l k\r^s f(k)$. 
\item For any $w \in \mathbb{R}^3\setminus\{0\}$ we denote $\widehat{w} = \frac{w}{\vert w \vert}.$
    \color{black}
    \item Finally, given $k,k_1\in\R^3$ and $\sigma\in\S^2$, we will denote
    \begin{align*}
      &k^*=K-\frac{|w|}{2}\sigma,\quad k_1^*=K+\frac{|w|}{2}\sigma,\\
      &K=\frac{k+k_1}{2},\quad w=k-k_1,\quad E=|k|^2+|k_1|^2.
    \end{align*}
\end{itemize}

\subsection*{Acknowledgements}
I.A. was supported by the NSF grant  DMS-2418020 and the PSC-CUNY Research Award 68653-0056.

\section{Kinetic toolbox} \label{sec:tools}
In this section, we start by recalling basic facts about Bobylev variables. Then we record some basic geometric identities that will be used routinely in the proof. Finally, we prove collisional averaging estimates that will be crucial to offset the linear growth of the cross section in later sections.

\subsection{Bobylev variables}
First, we introduce the so-called Bobylev variables \cite{Bo75, Bo88}, a classical kinetic theory tool that we will rely on to prove crucial collisional averaging estimates. The Bobylev variables appear naturally in the gain operator above and thus will often be used to provide an alternative parametrization of the collisional operator. Here, we outline their most important properties.

Given $\sigma\in\S^2$, we define the Bobylev variables as the maps $R_\sigma^+,R_\sigma^-:\R^3\to\R^3$ defined by 
\begin{align}
R_\sigma^+(y)=\frac{y}{2}+\frac{|y|}{2}\sigma,\quad R_\sigma^-(y)=\frac{y}{2}-\frac{|y|}{2}\sigma.\label{R def}
\end{align}
The following identities are easily checked:
\begin{align}
R_\sigma^+(y)+R_\sigma^-(y)&=y,\label{decomposition of u}\\
R_\sigma^+(y)\cdot R_\sigma^-(y)&=0,\label{orthogonality}\\
|R_\sigma^+(y)|^2+|R_\sigma^-(y)|^2&=|y|^2.\label{pythagorean}
\end{align} 
Furthermore, recalling that $w = k-k_1$, we obtain
\begin{align}\label{R+ R- v}
R_\sigma^+(w)=k-k^*=k_1^*-k_1,\quad R_\sigma^-(w)=k-k_1^*=k^*-k_1.
\end{align}

 The next result shows that the Bobylev variables are diffeomorphisms when restricted appropriately. It also provides useful identities relating the magnitudes and angles between these various variables. The proof  of this result is contained in our previous papers \cite{AmLe24,AmLemain25}. 

\begin{proposition} \label{chgvarkin}
Let $\sigma\in\S^{2}$ and $\epsilon \in\{+,-\}$.
Then the map
$$R_\sigma^{\epsilon}:\lbrace y\in\R^3: y \cdot \sigma \neq - \epsilon  \vert y\vert \rbrace\to \lbrace \nu\in\R^3: \epsilon \,(\nu \cdot \sigma) > 0 \rbrace,$$ is a diffeomorphism with inverse
\begin{align}\label{inverse function}
    y=\big(R_{\sigma}^{\epsilon}\big)^{-1}(\nu) = 2 \nu  -\frac{\vert \nu \vert}{ (\widehat{\nu}\cdot \sigma )} \sigma,
\end{align}
and Jacobian 
\begin{equation}\label{Jacobian}
    \jac\,(R_\sigma^{\epsilon})^{-1}(\nu)=\frac{4 }{(\widehat{\nu}\cdot\sigma)^2}.
\end{equation}    
Moreover, for any $u\in\R^3$ with $y\cdot\sigma\neq -\epsilon|y|$, we have
\begin{align}
|R_\sigma^\epsilon(y)\cdot\sigma|&=\epsilon\,(R_\sigma^\epsilon(y)\cdot\sigma),\label{sign}\\
|y|&=\frac{|R_{\sigma}^{\epsilon}(y)|}{|\widehat{R}_{\sigma}^{\epsilon}(y)\cdot\sigma|}\label{magnitude},\\
\widehat{y} \cdot \sigma &= \epsilon \left(2 |\widehat{R}_{\sigma}^{\epsilon}(y)\cdot \sigma|^2 -1\right). \label{angle}
\end{align}
Finally, for $y\in \R^3$ with $y\cdot\sigma\neq\pm |y|$, we have
\begin{equation}\label{R+ R- relation}
 |\widehat{R}_\sigma^+(y)|^2+|\widehat{R}_\sigma^-(y)|^2=1. 
\end{equation}
\end{proposition}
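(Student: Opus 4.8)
The plan is to derive every assertion by direct computation from the definition $R_\sigma^\epsilon(y)=\tfrac{y}{2}+\tfrac{\epsilon}{2}|y|\sigma$ (writing $\epsilon=\pm1$) together with the elementary identities \eqref{orthogonality}--\eqref{pythagorean}, which in particular give $|R_\sigma^\pm(y)|^2=\tfrac12|y|\big(|y|\pm y\cdot\sigma\big)$ and $R_\sigma^\pm(y)\cdot\sigma=\tfrac12\big(y\cdot\sigma\pm|y|\big)$. Fix $\sigma\in\S^2$, $\epsilon\in\{+,-\}$, take $y$ in the stated domain, and set $\nu=R_\sigma^\epsilon(y)$. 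Dotting the definition with $\sigma$ gives $\epsilon(\nu\cdot\sigma)=\tfrac12\big(|y|+\epsilon\,y\cdot\sigma\big)\ge\tfrac12\big(|y|-|y\cdot\sigma|\big)\ge0$, with equality precisely when $y\cdot\sigma=-\epsilon|y|$; this is \eqref{sign}, it shows $R_\sigma^\epsilon$ maps the domain into $\{\epsilon(\nu\cdot\sigma)>0\}$, and it shows $0$ lies outside the domain so that $R_\sigma^\epsilon$ is smooth there. Next, expanding $|\nu|^2$ and eliminating $y\cdot\sigma$ through $y\cdot\sigma=2(\nu\cdot\sigma)-\epsilon|y|$ yields the key relation $|\nu|^2=\epsilon|y|(\nu\cdot\sigma)=|y|\,|\nu\cdot\sigma|$; solving for $|y|$ is \eqref{magnitude}, and inserting this into $y=2\nu-\epsilon|y|\sigma$ produces the inverse formula \eqref{inverse function}. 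Denoting its right-hand side $\Phi(\nu)$, the same type of manipulation shows $\Phi$ is smooth on $\{\epsilon(\nu\cdot\sigma)>0\}$, satisfies $|\Phi(\nu)|=|\nu|^2/|\nu\cdot\sigma|$ and $\Phi(\nu)\cdot\sigma\neq-\epsilon|\Phi(\nu)|$ (so $\Phi$ lands in the domain of $R_\sigma^\epsilon$), and obeys $R_\sigma^\epsilon\circ\Phi=\mathrm{id}$, $\Phi\circ R_\sigma^\epsilon=\mathrm{id}$; hence $R_\sigma^\epsilon$ is the claimed diffeomorphism.

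For the Jacobian I would write $\Phi(\nu)=2\nu-g(\nu)\sigma$ with $g(\nu)=|\nu|^2/(\nu\cdot\sigma)$, so that $D\Phi(\nu)$ differs from $2\,\mathrm{Id}$ by the rank-one matrix $\sigma\,(\nabla g(\nu))^{\mathsf T}$. By the matrix determinant lemma, $\det D\Phi(\nu)=8\big(1-\tfrac12\,\sigma\cdot\nabla g(\nu)\big)$, and since $\sigma\cdot\nabla g(\nu)=2-|\nu|^2/(\nu\cdot\sigma)^2$ this collapses to $4|\nu|^2/(\nu\cdot\sigma)^2=4/(\widehat\nu\cdot\sigma)^2>0$, which is \eqref{Jacobian} (the positivity identifies the Jacobian with the determinant).

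For the remaining angle identities I would again use $y\cdot\sigma=2(\nu\cdot\sigma)-\epsilon|y|$ together with $\epsilon|y|=|\nu|^2/(\nu\cdot\sigma)$ to get $y\cdot\sigma=2(\nu\cdot\sigma)-|\nu|^2/(\nu\cdot\sigma)$; dividing by $|y|=|\nu|^2/|\nu\cdot\sigma|$ and simplifying gives $\widehat y\cdot\sigma=\epsilon\big(2(\widehat\nu\cdot\sigma)^2-1\big)$, i.e.\ \eqref{angle}. Finally, from $|R_\sigma^\pm(y)|^2=\tfrac12|y|(|y|\pm y\cdot\sigma)$ and $R_\sigma^\pm(y)\cdot\sigma=\tfrac12(y\cdot\sigma\pm|y|)$ one computes $|\widehat R_\sigma^\pm(y)\cdot\sigma|^2=\tfrac12\big(1\pm\widehat y\cdot\sigma\big)$, and summing the two gives \eqref{R+ R- relation}. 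The whole argument is elementary linear algebra in $\R^3$ and is already carried out in \cite{AmLe24,AmLemain25}; the only points demanding care are keeping track of the sign $|\nu\cdot\sigma|=\epsilon(\nu\cdot\sigma)$ on the codomain, and recognizing $D\Phi$ as a rank-one update of a multiple of the identity so that the matrix determinant lemma applies and no explicit $3\times3$ expansion is needed — this is the closest thing to an obstacle here.
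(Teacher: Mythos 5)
Your proof is correct. There is nothing in this paper to compare it against: the authors do not prove Proposition \ref{chgvarkin} here but simply refer to \cite{AmLe24,AmLemain25}, so a self-contained direct verification like yours is exactly what is needed. Your two scalar identities $|R_\sigma^\pm(y)|^2=\tfrac12|y|\big(|y|\pm y\cdot\sigma\big)$ and $R_\sigma^\pm(y)\cdot\sigma=\tfrac12\big(y\cdot\sigma\pm|y|\big)$ do generate everything, and the delicate sign bookkeeping is handled properly: the conversion $\epsilon|y|=|\nu|^2/(\nu\cdot\sigma)$, which uses \eqref{sign}, is what makes \eqref{inverse function} come out with $|\nu|/(\widehat\nu\cdot\sigma)$ rather than an absolute value, and your verification that $\Phi$ maps $\{\epsilon(\nu\cdot\sigma)>0\}$ back into the domain and inverts $R_\sigma^\epsilon$ on both sides settles the diffeomorphism claim. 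The Jacobian computation via the rank-one update $D\Phi(\nu)=2\,\mathrm{Id}-\sigma(\nabla g)^{\mathsf T}$ and the matrix determinant lemma, giving $\det D\Phi=8\big(1-\tfrac12\,\sigma\cdot\nabla g\big)=4/(\widehat\nu\cdot\sigma)^2>0$, is a clean alternative to an explicit $3\times3$ expansion and agrees with \eqref{Jacobian}. One small remark: as literally printed, \eqref{R+ R- relation} reads $|\widehat R_\sigma^+(y)|^2+|\widehat R_\sigma^-(y)|^2=1$, which is vacuously false as stated (each term is the squared norm of a unit vector); the identity you actually prove, $|\widehat R_\sigma^+(y)\cdot\sigma|^2+|\widehat R_\sigma^-(y)\cdot\sigma|^2=1$, obtained from $|\widehat R_\sigma^\pm(y)\cdot\sigma|^2=\tfrac12\big(1\pm\widehat y\cdot\sigma\big)$, is the version the paper uses later (e.g.\ in the proof of \eqref{precollisional on k^*}), so your reading corrects an evident typo rather than deviating from the intended statement.
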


\subsection{Geometric identities}
The most pathological interactions occur when the frequencies of the interacting waves are on different scales and cannot pointwise offset the resonant cross-section. The next lemma provides an elementary, yet very useful lower bound for relating a small frequency with  a larger one, up to a singularity on the scattering direction $\sigma.$

\begin{lemma}
Let $k,k_1\in \R^3$ and $\sigma\in\S^2$. Then, the following point-wise estimates hold
\begin{align} \label{k^* lower bound by E}
\l k^* \r, \l k_1^* \r &>\left(\frac{1+E}{2}\right)^{1/2} \big(1 - \lambda_E\vert \widehat{K} \cdot \sigma \vert \big)^{1/2},\quad \lambda_E=\frac{E}{1+E},\\
\label{lower bound on v* by v1*}\l k^*\r&\geq \l k_1^*\r(1-\lambda_{k_1^*}|\widehat{k_1^*}\cdot\sigma|)^{1/2},\quad \lambda_{k_1^*}=\frac{|k_1^*|^2}{\l k_1^*\r^2},\\
\label{lower bound on v1* by v*}\l k_1^*\r&\geq \l k^*\r(1-\lambda_{k^*}|\widehat{k^*}\cdot\sigma|)^{1/2},\quad \lambda_{k^*}=\frac{|k^*|^2}{\l k^*\r^2},\\
\l k_1\r &> \frac{\l k\r}{3}\left(1-\lambda_{k}|\widehat{(k-2R_\sigma^\epsilon(w)}\cdot\sigma|\right)^{1/2},\quad \lambda_{k}=\frac{|k|^2}{\l k\r^2},\quad\epsilon\in\{+,-\},\label{lower bound on v1}\\
{\l k\r} &> {\frac{\l k_1\r}{3}\left(1-\lambda_{k_1}|\widehat{(k_1  + 2R_\sigma^\epsilon(w))}\cdot\sigma|\right)^{1/2},\quad \lambda_{k_1}=\frac{|k_1|^2}{\l k_1\r^2}, \quad \epsilon\in\{+,-\}}\label{lower bound on v}.
\end{align}
\end{lemma}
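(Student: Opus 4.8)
The plan is to prove each inequality by exploiting the orthogonal decomposition of the relevant vector with respect to $\sigma$, combined with the elementary identity $|a+b|^2 = |a|^2 + 2a\cdot b + |b|^2$ applied in the right variables. The unifying idea is: all five estimates say that a vector's Japanese bracket is controlled from below by another bracket, with a degeneracy factor that vanishes only when a certain direction aligns with $\pm\sigma$; this degeneracy factor is always of the form $(1-\lambda|\widehat{\cdot}\cdot\sigma|)^{1/2}$, which is exactly what one gets when separating a $|\cdot|^2$ term from a $\langle\cdot\rangle^2 = 1 + |\cdot|^2$ term and extracting a clean power.

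For \eqref{k^* lower bound by E}, I would start from $k^* = K - \tfrac{|w|}{2}\sigma$ and $k_1^* = K + \tfrac{|w|}{2}\sigma$. Expanding, $|k^*|^2 = |K|^2 - |w|\,K\cdot\sigma + \tfrac{|w|^2}{4}$ and $|k_1^*|^2 = |K|^2 + |w|\,K\cdot\sigma + \tfrac{|w|^2}{4}$, and using the parallelogram law $|K|^2 + \tfrac{|w|^2}{4} = \tfrac{E}{2}$, both become $\tfrac{E}{2} \mp |w|\,K\cdot\sigma \geq \tfrac{E}{2} - |w||K|\,|\widehat{K}\cdot\sigma|$. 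Since $|w| \leq \sqrt{2E}$ and $|K| \leq \sqrt{E/2}$ give $|w||K| \leq E$, I get $|k^*|^2, |k_1^*|^2 \geq \tfrac{E}{2}(1 - 2|\widehat K\cdot\sigma|)$, and then $\langle k^*\rangle^2 = 1 + |k^*|^2 \geq 1 + \tfrac{E}{2} - E|\widehat K\cdot\sigma| = \tfrac{1+E}{2} + \tfrac{1}{2} - E|\widehat K\cdot\sigma| \geq \tfrac{1+E}{2}(1 - \lambda_E|\widehat K\cdot\sigma|)$, after checking the arithmetic with $\lambda_E = E/(1+E)$; taking square roots closes it. For \eqref{lower bound on v* by v1*} and \eqref{lower bound on v1* by v*}, I would use $k^* = k_1^* - |w|\sigma$ (from \eqref{collisional law}), so $|k^*|^2 = |k_1^*|^2 - 2|w|\,k_1^*\cdot\sigma + |w|^2 \geq |k_1^*|^2 - 2|w||k_1^*||\widehat{k_1^*}\cdot\sigma|$; here I need $|w| \leq 2|k_1^*|$, which holds because $|k_1^*| \geq \tfrac{|w|}{2}$ is built into the collision law (indeed $k_1^* - K = \tfrac{|w|}{2}\sigma$ has length $|w|/2$, and one checks $|k_1^*|\geq |w|/2$ from the geometry, or more carefully from $|k_1^*|^2 = |K|^2 + |K\cdot w| + |w|^2/4 \geq |w|^2/4$). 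Then $|k^*|^2 \geq |k_1^*|^2(1 - 4|\widehat{k_1^*}\cdot\sigma| \cdot \tfrac{|w|}{2|k_1^*|})$... I would instead write it cleanly as $\langle k^*\rangle^2 \geq \langle k_1^*\rangle^2 - |k_1^*|^2|\widehat{k_1^*}\cdot\sigma|\cdot(\text{const})$ and match the stated $\lambda_{k_1^*} = |k_1^*|^2/\langle k_1^*\rangle^2$; the precise bookkeeping is the routine-but-fiddly part.

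For the last two estimates \eqref{lower bound on v1} and \eqref{lower bound on v}, I would use \eqref{R+ R- v}, which gives $k_1 = k - R_\sigma^\epsilon(w) - R_\sigma^{-\epsilon}(w)$ depending on sign, but more usefully $k - 2R_\sigma^\epsilon(w)$ appears: note $k^* = k - R_\sigma^+(w)$ and $k_1^* = k - R_\sigma^-(w)$, and $k_1 = k - w = k - R_\sigma^+(w) - R_\sigma^-(w)$. The quantity $k - 2R_\sigma^\epsilon(w)$ is the reflection of $k_1$-type data; I would expand $|k_1|^2 = |k - w|^2$ and relate $w\cdot\sigma$ to the stated direction via $R_\sigma^\epsilon(w)\cdot\sigma = \tfrac12(w\cdot\sigma + \epsilon|w|)$, so that controlling $|\widehat{(k - 2R_\sigma^\epsilon(w))}\cdot\sigma|$ controls the cross term $k\cdot w$. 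The factor $\tfrac13$ should come out of crude triangle-inequality bounds $|k| \leq |k_1| + |w| \leq 3|k_1|$ on the degenerate set combined with the refined estimate off it. \textbf{The main obstacle} will be \eqref{lower bound on v1}–\eqref{lower bound on v}: unlike the first three, the "good" direction is not one of the natural vectors $K, k^*, k_1^*$ but the twisted combination $k - 2R_\sigma^\epsilon(w)$, so one must carefully track how $R_\sigma^\epsilon$ acts and verify that the algebra produces exactly $\lambda_k = |k|^2/\langle k\rangle^2$ with the constant $\tfrac13$; getting the sign conventions ($\epsilon$ versus $-\epsilon$) consistent and the constant non-vacuous is where care is needed, though none of it is deep.
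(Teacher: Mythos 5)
There are genuine gaps in all three blocks of your sketch, and they share a common cause: you repeatedly discard one of the two square terms in the relevant expansion, after which the stated constants cannot be obtained. The engine of the paper's proof is the inequality $|a+b|^2\ge |a|^2+|b|^2-2|a||b||\widehat a\cdot\widehat b|\ge (|a|^2+|b|^2)\big(1-|\widehat a\cdot\widehat b|\big)$, where the cross term is absorbed by AM--GM while \emph{both} squares are kept. For \eqref{k^* lower bound by E} your bound $|w||K|\le E$ is too lossy: your chain would need $1+\frac E2-E|\widehat K\cdot\sigma|\ge\frac{1+E}{2}-\frac E2|\widehat K\cdot\sigma|$, i.e.\ $E|\widehat K\cdot\sigma|\le 1$, which fails for large $E$; what is needed is the sharp $|w||K|\le \frac{|w|^2}{4}+|K|^2=\frac E2$, giving $|k^*|^2\ge\frac E2(1-|\widehat K\cdot\sigma|)$, after which adding $1$ closes the estimate. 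For \eqref{lower bound on v* by v1*}--\eqref{lower bound on v1* by v*}, your auxiliary claim $|k_1^*|\ge |w|/2$ is false (take $k\cdot k_1=0$, $k,k_1\ne 0$, and $\sigma=-\widehat K$: then $k_1^*=K+\frac{|w|}{2}\sigma=0$ while $w\ne 0$), and even granting it your route produces a constant $2$ or $4$ in front of $|\widehat{k_1^*}\cdot\sigma|$ rather than the stated $\lambda_{k_1^*}\le 1$, so the lower bound becomes vacuous (negative under the square root) precisely in the near-degenerate regime $|\widehat{k_1^*}\cdot\sigma|\to 1$ where the lemma is later used to integrate angular singularities. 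The correct step keeps $|w|^2$: $|k^*|^2=\big|k_1^*-|w|\sigma\big|^2\ge(|k_1^*|^2+|w|^2)\big(1-|\widehat{k_1^*}\cdot\sigma|\big)$, hence $\langle k^*\rangle^2\ge 1+|k_1^*|^2(1-|\widehat{k_1^*}\cdot\sigma|)=\langle k_1^*\rangle^2\big(1-\lambda_{k_1^*}|\widehat{k_1^*}\cdot\sigma|\big)$.

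For \eqref{lower bound on v1}--\eqref{lower bound on v} you explicitly flag the step as the main obstacle and do not resolve it, and the heuristic you offer does not work: $|k|\le|k_1|+|w|\le 3|k_1|$ presupposes $|w|\le 2|k_1|$, which fails whenever $|k|\gg|k_1|$ --- exactly the regime these estimates are designed for. The paper's device is to set $\nu:=R_\sigma^\epsilon(w)$ and use the inverse formula \eqref{inverse function} to write $k_1=k-w=(k-2\nu)+\frac{|\nu|}{\widehat\nu\cdot\sigma}\sigma$; applying the same quadratic inequality with the direction $k-2\nu$ (this is exactly why the ``twisted'' direction appears in the statement) gives $|k_1|^2\ge\big(|k-2\nu|^2+\frac{|\nu|^2}{|\widehat\nu\cdot\sigma|^2}\big)\big(1-|\widehat{(k-2\nu)}\cdot\sigma|\big)\ge(|k-2\nu|^2+|\nu|^2)\big(1-|\widehat{(k-2\nu)}\cdot\sigma|\big)$, and the factor $1/3$ comes from the elementary dichotomy $|k-2\nu|^2+|\nu|^2\ge|k|^2/9$ (if $|\nu|\ge|k|/3$ this is clear; otherwise $|k-2\nu|\ge|k|-2|\nu|>|k|/3$), after which adding $1$ yields $\langle k_1\rangle^2>\frac{\langle k\rangle^2}{9}\big(1-\lambda_k|\widehat{(k-2\nu)}\cdot\sigma|\big)$; \eqref{lower bound on v} is symmetric, using $k=w+k_1$. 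So while your overall orientation (expand, separate the $1$ from the square, extract a factor $(1-\lambda|\cdot|)^{1/2}$) matches the paper, the three concrete mechanisms --- $|w||K|\le E/2$ by AM--GM, the inequality $|a+b|^2\ge(|a|^2+|b|^2)(1-|\widehat a\cdot\widehat b|)$, and the $|k-2\nu|^2+|\nu|^2\ge|k|^2/9$ dichotomy after the inverse Bobylev substitution --- are missing, and without them the stated constants and degeneracy factors are not recovered.
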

\begin{proof}
To prove \eqref{k^* lower bound by E}, we rely on the
elementary inequality
\begin{align}\label{energy inequality}
|w||K|&\leq \frac{|w|^2}{4}+|K|^2=\frac{E}{2}.
\end{align}
Then, by \eqref{collisional law} we have
\begin{align*}
|k^*|^2&=\frac{E}{2}-|w||K|(\widehat{K}\cdot\sigma) \geq \frac{E}{2}-|w||K||\widehat{K}\cdot\sigma| \geq \frac{E}{2}(1-|\widehat{K}\cdot\sigma|),
\end{align*}
which in turn implies
$$\l k^*\r^2=1+|k^*|^2\geq 1+\frac{E}{2}(1-|\widehat{K}\cdot\sigma|)> \frac{1+E}{2}\left(1-\lambda_E|\widehat{K}\cdot\sigma)|\right).$$
The bound for $k_1^*$ follows identically using the expression $|k_1^*|^2=\frac{E}{2}+|w||K|(\widehat{K}\cdot\sigma)$ instead. 

To prove \eqref{lower bound on v* by v1*}, we use \eqref{collisional law} to write $k_1^*-k^*=|w|\sigma$. Then we have
$$|k^*|^2=\Big|k_1^*-|w|\sigma\Big|^2\geq (|k_1^*|^2+|w|^2)(1-|\widehat{k_1^*}\cdot\sigma|),$$
which yields
$$\l k^*\r^2=1+|k^*|^2\geq 1+|k_1^*|^2(1-\widehat{k_1^*}\cdot\sigma)=\l k_1^*\r^2\left(1-\lambda_{k_1^*}|\widehat{k_1^*}\cdot\sigma|\right).$$
Bound \eqref{lower bound on v1* by v*} follows identically using the expression $k_1^*=k^*+|w|\sigma$ instead. 

To prove \eqref{lower bound on v1}, let us denote $\nu=R_\sigma^\epsilon(w)$. We use \eqref{inverse function} to write
$$k_1=k-w=\left(k-2\nu\right)+\frac{|\nu|}{(\widehat{\nu}\cdot\sigma)}\sigma,$$
which yields
\begin{align*}
|k_1|^2&\geq \left(|k-2\nu|^2+\frac{|\nu|^2}{|\widehat{\nu}\cdot\sigma|^2}\right)\left(1-|\widehat{(k-2\nu)}\cdot\sigma|\right)\notag\\
&\geq \left(|k-2\nu|^2+|\nu|^2\right)\left(1-|\widehat{(k-2\nu)}\cdot\sigma|\right).
\end{align*}
Now, we notice that $|k-2\nu|^2+|\nu|^2\geq \frac{|k|^2}{9}$. 
Indeed,
if $|\nu|\geq |k|/3$ the claim is immediate, while if $|\nu|<|k|/3$, the triangle inequality implies $|k-2\nu|\geq |k|-2|\nu|> |k|/3$. Combining these facts, we obtain
$$\l k_1\r^2=1+|k_1|^2\geq 1+\frac{|k|^2}{9}\left(1-|\widehat{(k-2\nu)\cdot\sigma|}\right)>\frac{\l k\r^2}{9}\left(1-\lambda_k|\widehat{(k-2\nu)}\cdot\sigma|\right),\quad\lambda_k=\frac{|k|^2}{\l k\r^2}.$$
Bound \eqref{lower bound on v} follows identically using the expression $k=w+k_1$ instead. The proof is complete.

\end{proof}

\subsection{Collisional averaging estimates}
We now present the averaging estimates that are necessary to offset the linear growth of the resonant cross-section. They are largely inspired by our previous work on the wave kinetic and Boltzmann equations \cite{AmLe24, AmLemain25}.

\begin{lemma}\label{collisional averging lemma}
Let $l>2$. Then for any $k,k_1\in\R^3$, the following averaging estimates hold
\begin{align}\label{collisional averaging individual}
\int_{\S^2}\frac{1}{\l k^*\r^l}\,d\sigma=\int_{\S^2}\frac{1}{\l k_1^*\r^l}\,d\sigma&\lesssim \frac{1}{(l-2)(1+E)}, \\
\int_{\S^2}\frac{1}{\l k^*\r^l\l k_1^*\r^l}\,d\sigma&\lesssim \frac{1}{(l-2)(1+E)^{1+l/2}}.\label{collisional avaraging coupled}
\end{align}
\end{lemma}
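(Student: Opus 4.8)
The plan is to reduce both estimates to a single one-dimensional integral using the geometric lower bounds already established. For the first estimate \eqref{collisional averaging individual}, the equality of the two integrals is immediate: the substitution $\sigma \to -\sigma$ exchanges $k^*$ and $k_1^*$ while preserving $d\sigma$. To bound the common value, I would invoke \eqref{k^* lower bound by E}, which gives $\l k^* \r^{-l} \lesssim (1+E)^{-l/2}(1-\lambda_E |\widehat{K}\cdot\sigma|)^{-l/2}$. It then suffices to bound $\int_{\S^2}(1-\lambda_E|\widehat{K}\cdot\sigma|)^{-l/2}\,d\sigma$. Choosing polar coordinates on $\S^2$ with axis along $\widehat{K}$ and writing $t = \widehat{K}\cdot\sigma \in [-1,1]$, this becomes (up to the $2\pi$ from the azimuthal integration) $\int_{-1}^{1}(1-\lambda_E |t|)^{-l/2}\,dt = 2\int_0^1 (1-\lambda_E t)^{-l/2}\,dt$. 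Since $l > 2$, the antiderivative is $\frac{2}{\lambda_E(l/2-1)}(1-\lambda_E t)^{1-l/2}$, and evaluating from $0$ to $1$ gives a quantity bounded by $\frac{C}{(l-2)\lambda_E}$ because the boundary term at $t=1$, namely $(1-\lambda_E)^{1-l/2}$, has a positive exponent subtracted — wait, here one must be slightly careful: $(1-\lambda_E)^{1-l/2} = \l K\r^{l-2}\cdot(\text{something})$ could be large. The correct bookkeeping is that $\int_0^1(1-\lambda_E t)^{-l/2}dt \le \frac{1}{\lambda_E(l/2-1)}$ only after noting $1-\lambda_E = (1+E)^{-1} \cdot (\text{bounded})$; actually the clean statement is $\int_0^1 (1-\lambda_E t)^{-l/2}\,dt \leq \frac{2}{(l-2)\lambda_E}\big[1 - (1-\lambda_E)^{1-l/2}\big]^{-1}$... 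To avoid this trap I would instead directly bound $\int_{-1}^1 (1-\lambda_E|t|)^{-l/2}dt \lesssim \frac{1}{(l-2)\lambda_E}$ using that $\lambda_E \in [0,1)$ and the integral is improper only if $\lambda_E = 1$, then absorb the factor $\lambda_E^{-1}(1+E)^{-l/2}$ and note $\lambda_E^{-1}(1+E)^{-l/2} = E^{-1}(1+E)^{1-l/2} \lesssim (1+E)^{-1}$ since $l > 2$ and $E^{-1}(1+E) \approx 1$ for $E \gtrsim 1$ while for $E \lesssim 1$ the whole integrand is bounded. The case split $E \lesssim 1$ versus $E \gtrsim 1$ is the cleanest way to handle the degeneracy.

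For the coupled estimate \eqref{collisional avaraging coupled}, I would first apply \eqref{lower bound on v* by v1*} in the form $\l k^* \r^{-l} \leq \l k_1^* \r^{-l}(1 - \lambda_{k_1^*}|\widehat{k_1^*}\cdot\sigma|)^{-l/2}$ to write
\begin{align*}
\int_{\S^2}\frac{d\sigma}{\l k^*\r^l \l k_1^*\r^l} \leq \int_{\S^2}\frac{1}{\l k_1^*\r^{2l}}\,\frac{d\sigma}{(1-\lambda_{k_1^*}|\widehat{k_1^*}\cdot\sigma|)^{l/2}}.
\end{align*}
This is not yet in a usable shape because $k_1^*$ itself depends on $\sigma$. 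The trick is instead to keep one factor $\l k_1^*\r^{-l}$ "as is" and estimate the other factor $\l k^*\r^{-l}$ by the $E$-bound \eqref{k^* lower bound by E}: $\l k^*\r^{-l} \lesssim (1+E)^{-l/2}(1-\lambda_E|\widehat{K}\cdot\sigma|)^{-l/2}$. But then one still has $\l k_1^*\r^{-l}$ with a $\sigma$-dependence, so the honest approach is to symmetrize: by the $\sigma \to -\sigma$ symmetry the product integral equals itself, and I would bound $\l k^*\r^{-l}\l k_1^*\r^{-l} = (\l k^*\r^{-l}\l k_1^*\r^{-l})^{1/2}\cdot(\l k^*\r^{-l}\l k_1^*\r^{-l})^{1/2}$ then apply \eqref{k^* lower bound by E} to BOTH $\l k^*\r^{-l/2}$ and $\l k_1^*\r^{-l/2}$, getting a factor $(1+E)^{-l/4}(1-\lambda_E|\widehat K\cdot\sigma|)^{-l/4}$ each. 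Multiplying the two, the net is $(1+E)^{-l/2}(1-\lambda_E|\widehat K\cdot\sigma|)^{-l/2}$ — exactly what appeared in the first estimate. So $\int_{\S^2}\l k^*\r^{-l}\l k_1^*\r^{-l}\,d\sigma \lesssim (1+E)^{-l/2}\int_{\S^2}(1-\lambda_E|\widehat K\cdot\sigma|)^{-l/2}\,d\sigma \lesssim (1+E)^{-l/2}\cdot\frac{1}{(l-2)(1+E)} = \frac{1}{(l-2)(1+E)^{1+l/2}}$, which is the claim.

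The main obstacle I anticipate is handling the endpoint behavior $\lambda_E \to 1$ (equivalently $E \to \infty$) in the one-dimensional integral $\int_0^1 (1-\lambda_E t)^{-l/2}\,dt$ cleanly, i.e. making sure the constant blows up only like $\frac{1}{l-2}$ and the $E$-decay is exactly as stated. The resolution is the case split on whether $E$ is bounded or large together with the sharp elementary inequality $\int_0^1(1-at)^{-l/2}dt \le \frac{2}{(l-2)(1-a)^{l/2-1}}$ for $a\in[0,1)$ combined with $1-\lambda_E = (1+E)^{-1}$, which converts $(1-\lambda_E)^{1-l/2}$ into exactly the power $(1+E)^{l/2-1}$ needed to cancel against $(1+E)^{-l/2}$ and the $\lambda_E^{-1}\approx 1$ factor for large $E$. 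Everything else is a routine polar-coordinate computation on $\S^2$.
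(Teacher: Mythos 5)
Your treatment of \eqref{collisional averaging individual} is, in the end, the paper's argument: reduce via \eqref{k^* lower bound by E} to $\int_0^1(1-\lambda_E t)^{-l/2}\,dt$, keep the boundary term $(1-\lambda_E)^{1-l/2}=(1+E)^{l/2-1}$, and let it cancel against the prefactor $(1+E)^{-l/2}$ to leave $(1+E)^{-1}$. The correct bookkeeping is only in your final paragraph, though: the uniform bound you float mid-argument, $\int_{-1}^{1}(1-\lambda_E|t|)^{-l/2}\,dt\lesssim\frac{1}{(l-2)\lambda_E}$, is false as $\lambda_E\to1$ (for $l=4$ the integral equals $\frac{2}{1-\lambda_E}$), so that shortcut should be deleted; the blow-up of the boundary term is not a trap but exactly the mechanism producing the stated rate.

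For the coupled estimate \eqref{collisional avaraging coupled} there is a genuine gap. The last step of your chain asserts $\int_{\S^2}(1-\lambda_E|\widehat K\cdot\sigma|)^{-l/2}\,d\sigma\lesssim\frac{1}{(l-2)(1+E)}$, which cannot hold: the integrand is $\geq 1$, so the integral is $\geq 4\pi$, and in fact it is comparable to $(1+E)^{l/2-1}/(l-2)$ for large $E$. What is true (and is the content of \eqref{collisional averaging individual}) is that this angular integral multiplied by $(1+E)^{-l/2}$ is $\lesssim\frac{1}{(l-2)(1+E)}$ --- so you have used the single available prefactor $(1+E)^{-l/2}$ twice. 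Your reduction (apply \eqref{k^* lower bound by E} to half of each factor and drop the rest) therefore only yields $\frac{1}{(l-2)(1+E)}$, not $\frac{1}{(l-2)(1+E)^{1+l/2}}$; and no purely pointwise use of \eqref{k^* lower bound by E} on both factors can do better, since that bound degenerates as $\widehat K\cdot\sigma\to\pm1$ and the angular integration of the degenerate factor costs $(1+E)^{l/2-1}$ (note $\l k^*\r$ can be $O(1)$ even when $E$ is huge, so $\l k^*\r^{-l}\lesssim(1+E)^{-l/2}$ is not available pointwise on all of $\S^2$). The missing idea is the one the paper uses: the resonant identity $|k^*|^2+|k_1^*|^2=E$ forces, at every $\sigma$, either $|k^*|^2>E/2$ or $|k_1^*|^2>E/2$; splitting $\S^2$ accordingly, on each piece the large factor is bounded pointwise by $(1+E)^{-l/2}$ thanks to the indicator, and the other factor is handled by the already proved averaged estimate \eqref{collisional averaging individual}, giving $(1+E)^{-l/2}\cdot\frac{1}{(l-2)(1+E)}$ as claimed.
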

\begin{proof}
Fix $k,k_1\in\R^3$. Note that both statements are trivial if $E \leq 1.$ Thus we may assume that $E>1$, which implies that $\lambda_E=\frac{E}{1+E}\in(1/2,1)$. 

We first prove \eqref{collisional averaging individual}. By symmetry (changing $\sigma\mapsto -\sigma$), it is evident that $\int_{\S^2}\frac{1}{\l k^*\r^l}\,d\sigma=\int_{\S^2}\frac{1}{\l k_1^*\r^l}\,d\sigma$, so it suffices to show the estimate for $k^*$. Using \eqref{k^* lower bound by E} and integrating in spherical coordinates, we obtain
\begin{align*}
\int_{\S^2}\frac{1}{\l k^*\r^l}\,d\sigma&\lesssim \frac{1}{(1+E)^{l/2}}\int_{\S^2}\frac{1}{(1-\lambda_E|\widehat{K}\cdot\sigma|)^{l/2}}\,d\sigma  \approx  \frac{1}{(1+E)^{l/2}}\int_{0}^{2 \pi}\frac{1}{(1-\lambda_E|\cos \theta|)^{l/2}} \sin \theta \,d\theta \color{black} \\ 
& \approx \frac{1}{(1+E)^{l/2}}\int_0^1\frac{1}{(1-\lambda_E x)^{l/2}}\,d  x \color{black} =\frac{1}{\lambda_E(1+E)^{1/2}}\int_0^{\lambda_E}\frac{1}{(1-y)^{l/2}}\,dy\approx \frac{(1-\lambda_E)^{1-l/2}}{(l-2)\lambda_E(1+E)^{l/2}} \\
& \approx\frac{1}{(l-2)(1+E)},
\end{align*}
where we used the facts $\lambda_E\in(1/2,1)$ and $1-\lambda_E=\frac{1}{1+E}$. Estimate \eqref{collisional averaging individual} is proved.

Now, to prove \eqref{collisional avaraging coupled}, the resonant condition $|k^*|^2+|k_1^*|^2=E$ implies that either $|k^*|^2>E/2$ or $|k_1^*|^2>E/2$. Thus, using \eqref{collisional averaging individual}, we obtain
\begin{align*}
\int_{\S^2}\frac{1}{\l k^*\r^l\l k_1^*\r^l}\,d\sigma&=\int_{\S^2}\frac{1}{\l k^*\r^l\l k_1^*\r^l}\mathds{1}_{|k^*|^2>E/2}\,d\sigma+\int_{\S^2}\frac{1}{\l k^*\r^l\l k_1^*\r^l}\mathds{1}_{|k_1^*|^2>E/2}\,d\sigma \\
&\lesssim \frac{1}{(1+E)^{l/2}}\left(\int_{\S^2}\frac{1}{\l k_1^*\r^l}\,d\sigma+\int_{\S^2}\frac{1}{\l k^*\r^l}\,d\sigma\right)\lesssim\frac{1}{(l-2)(1+E)^{1+l/2}}.
\end{align*}

\end{proof}

\begin{lemma}
Let $1\leq p<\infty$. Then, the following estimates hold
\begin{align}
\sup_{k\in\R^3}\int_{\R^3\times\S^2}\frac{|h(k^*)|^p\chi(\widehat{w}\cdot\sigma)}{|\widehat{R}_\sigma^-(w)\cdot\sigma|^{2\alpha}}\,d\sigma\,dk_1&\lesssim \|h\|_{L^p}^p,\quad \alpha<1\label{precollisional on k^*}, \\
\sup_{k\in\R^3}\int_{\R^3\times\S^2}|\widehat{R}_\sigma^-(w)\cdot\sigma|^{2\alpha}|h(k_1^*)|^p\,d\sigma\,dk_1&\lesssim \|h\|_{L^p}^p,\quad \alpha>1/2\label{precollisional on k_1^*}.\\
\sup_{k\in\R^3}\int_{\R^3\times\S^2}\frac{|h(k_1)|^p}{(1-|\widehat{K}\cdot\sigma|)^\alpha}\,d\sigma\,dk_1&\lesssim \|h\|_{L^p}^p,\quad \alpha<1\label{estimate on integral of v1}.
\end{align}
\end{lemma}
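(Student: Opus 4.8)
The plan is to treat all three bounds by the same two-step recipe: for fixed $k$ I would perform a Bobylev change of variables (Proposition~\ref{chgvarkin}) that converts the relevant post-collisional argument of $h$ into a pure translate of $k$, and then bound what remains by an elementary one-dimensional angular integral together with the translation invariance of $\|\cdot\|_{L^p}$.

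For \eqref{precollisional on k^*} I would fix $k$, substitute $y=k-k_1=w$ (so $dk_1=dy$) and then, for fixed $\sigma$, substitute $\nu=R_\sigma^+(y)$. By \eqref{R+ R- v} this gives $k^*=k-\nu$, hence $h(k^*)=h(k-\nu)$, and by Proposition~\ref{chgvarkin} it is a diffeomorphism onto $\{\nu:\nu\cdot\sigma>0\}$ with $dy=\frac{4}{(\widehat\nu\cdot\sigma)^2}\,d\nu$. From \eqref{angle} one gets $(\widehat\nu\cdot\sigma)^2=\frac{1+\widehat w\cdot\sigma}{2}$, so $\chi(\widehat w\cdot\sigma)$ becomes $\mathds{1}_{\{\widehat\nu\cdot\sigma>1/\sqrt2\}}$ --- precisely the region on which the Jacobian factor $(\widehat\nu\cdot\sigma)^{-2}$ is $\lesssim 1$ --- while \eqref{R+ R- relation} gives $|\widehat R_\sigma^-(w)\cdot\sigma|^2=1-(\widehat\nu\cdot\sigma)^2$. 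After Fubini the left-hand side is bounded by
\[
\int_{\R^3}|h(k-\nu)|^p\Big(\int_{\S^2}\frac{\mathds{1}_{\{\widehat\nu\cdot\sigma>1/\sqrt2\}}}{(1-(\widehat\nu\cdot\sigma)^2)^\alpha}\,d\sigma\Big)\,d\nu,
\]
the inner integral equals $2\pi\int_{1/\sqrt2}^1(1-t^2)^{-\alpha}\,dt$ in spherical coordinates around $\widehat\nu$, which is finite precisely when $\alpha<1$ (the singularity sits at $t=1$), and the outer integral is $\|h\|_{L^p}^p$.

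Estimate \eqref{precollisional on k_1^*} is the mirror case: I would substitute $y=w$ and then $\mu=R_\sigma^-(y)$, so that $k_1^*=k-\mu$ by \eqref{R+ R- v}, $|\widehat R_\sigma^-(w)\cdot\sigma|=|\widehat\mu\cdot\sigma|$, and $dy=\frac{4}{(\widehat\mu\cdot\sigma)^2}\,d\mu$; there is no cutoff here, but the weight $|\widehat\mu\cdot\sigma|^{2\alpha}$ absorbs part of the Jacobian, reducing the angular integral to $\int_{\S^2}|\widehat\mu\cdot\sigma|^{2\alpha-2}\mathds{1}_{\{\widehat\mu\cdot\sigma<0\}}\,d\sigma\approx\int_0^1 s^{2\alpha-2}\,ds$, finite iff $\alpha>1/2$, after which translation invariance finishes it. For \eqref{estimate on integral of v1} no substitution is needed at all: for each fixed $k_1$ (hence $K=\frac{k+k_1}{2}$) I would integrate in $\sigma$ first, using $\int_{\S^2}(1-|\widehat K\cdot\sigma|)^{-\alpha}\,d\sigma=4\pi\int_0^1(1-t)^{-\alpha}\,dt\lesssim_\alpha 1$ uniformly in $K$ for $\alpha<1$, and then integrate $|h(k_1)|^p$ over $k_1$.

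The only genuine obstacle is bookkeeping the Bobylev substitution correctly --- choosing $R_\sigma^+$ versus $R_\sigma^-$ so that the appropriate post-collisional variable becomes a translate of $k$, and tracking faithfully how $|\widehat R_\sigma^-(w)\cdot\sigma|$ and the Jacobian $(\widehat\nu\cdot\sigma)^{-2}$ transform. I would stress that in \eqref{precollisional on k^*} the cutoff $\chi(\widehat w\cdot\sigma)$ is \emph{essential}: it keeps $\widehat\nu\cdot\sigma$ bounded away from $0$, hence the Jacobian bounded, and without it the $\nu$-integral diverges near $\widehat\nu\cdot\sigma=0$. The degenerate loci $w\cdot\sigma=\pm|w|$ on which the Bobylev maps fail to be diffeomorphisms are Lebesgue-null and are discarded throughout.
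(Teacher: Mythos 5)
Your proposal is correct and follows essentially the same route as the paper: the substitution $y=w$ followed by the Bobylev change of variables $\nu=R_\sigma^{\pm}(y)$ from Proposition~\ref{chgvarkin}, the identities \eqref{R+ R- v}, \eqref{angle}, \eqref{R+ R- relation} to turn the argument of $h$ into $k-\nu$ and rewrite the angular weights, the cutoff (for \eqref{precollisional on k^*}) or the weight $|\widehat{R}_\sigma^-(w)\cdot\sigma|^{2\alpha}$ (for \eqref{precollisional on k_1^*}) to control the Jacobian, a one-dimensional angular integral convergent exactly under the stated conditions on $\alpha$, and translation invariance of $L^p$, with \eqref{estimate on integral of v1} done directly in spherical coordinates. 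No gaps.
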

\begin{proof} For the first two bounds, we rely on the identities
$$k^*=k-R_\sigma^+(w),\quad k_1^*=k-R_{\sigma}^-(w),\quad \widehat{w}\cdot\sigma=2|\widehat{R}^+_\sigma(w)\cdot\sigma|^2-1,\quad |\widehat{R}_\sigma^+(w)|^2+|\widehat{R}_\sigma^-(w)|^2=1,$$
straightforwardly deduced from \eqref{R+ R- v}, \eqref{angle} and \eqref{R+ R- relation}.

We first prove \eqref{precollisional on k^*}. Fix $k\in\R^3$. Using the substitution $y:=w=k-k_1$, we write
\begin{align*}
\int_{\R^3\times\S^2}\frac{|h(k^*)|^p\chi(\widehat{w}\cdot\sigma)}{|\widehat{R}_\sigma^-(w)\cdot\sigma|^{2\alpha}}\,d\sigma\,dk_1&=\int_{\R^3\times\S^2}\frac{|h(k-R_\sigma^+(w))|^p}{(1-|\widehat{R}_\sigma^+(w)|^2)^\alpha}\chi(2|\widehat{R}_\sigma^+(w)\cdot\sigma|^2-1)\,d\sigma\,dk_1\\
&=\int_{\R^3\times\S^2}\frac{|h(k-R_\sigma^+(y))|^p}{(1-|\widehat{R}_\sigma^+(y)\cdot\sigma|^2)^\alpha}\chi(2|\widehat{R}_\sigma^+(y)\cdot\sigma|^2-1)\,d\sigma\,dy.
\end{align*}
Now, using Proposition \ref{chgvarkin} to substitute $\nu:=R_\sigma^+(y)$, we obtain
\begin{align*}
\int_{\R^3\times\S^2}|h(k^*)|^p\chi(\widehat{w}\cdot\sigma)\,d\sigma\,dk_1&=\int_{\R^3} |h(k-\nu)|^p\left(\int_{\S^2}\frac{\chi(2|\widehat{\nu}\cdot\sigma|^2-1)}{|\widehat{\nu}\cdot\sigma|^2(1-|\widehat{\nu}\cdot\sigma|^2)^\alpha}\,d\sigma\right)\,d\nu\\
&\approx\int_{\R^3}|h(k-\nu)|^p\left(\int_0^1 \frac{\chi(2x^2-1)}{x^2(1-x^2)^{\alpha}}\,dx\right)\,d\nu\\
&\lesssim \int_{\R^3}|h(k-\nu)|^p\left(\int_0^1 \frac{1}{(1-x^2)^{\alpha}}\,dx\right)\,d\nu\\
& \leq \int_{\R^3}|h(k-\nu)|^p\left(\int_0^1 \frac{1}{(1-x)^{\alpha}}\,dx\right)\,d\nu\\
&\approx \int_{\R^3}|h(k-\nu)|^p\,d\nu=\|h\|_{L^p}^p,
\end{align*}
where we used the basic fact $\frac{\chi(2x^2-1)}{x^2}\lesssim 1$, $x\neq 0$ and the fact that $\alpha<1$ for the convergence of the integral in $x$.
Since $k$ is arbitrary, estimate \eqref{precollisional on k^*} follows.

We now prove \eqref{precollisional on k_1^*}. Fix $k\in\R^3$. Using the substitution $y:=w=k-k_1$, we obtain 
\begin{align*}
 \int_{\R^3\times\S^2}|\widehat{R}_\sigma^-(w)\cdot\sigma|^{2\alpha}|h(k_1^*)|^p\,d\sigma\,dk_1&=\int_{\R^3\times\S^2}|\widehat{R}_\sigma^-(w)\cdot\sigma|^{2\alpha}|h(k-R_\sigma(w))|^p\,d\sigma\,dk_1\\
 &=\int_{\R^3\times\S^2}|\widehat{R}_\sigma^-(y)\cdot\sigma|^{2\alpha}|h(k-R_\sigma(y))|^p\,d\sigma\,dy.
\end{align*}
Now, using Proposition \ref{chgvarkin} to substitute $\nu:=R_\sigma^-(y)$, we obtain
\begin{align*}
 \int_{\R^3\times\S^2}|\widehat{R}_\sigma^-(w)\cdot\sigma|^{2\alpha}|h(k_1^*)|^p\,d\sigma\,dk_1&=\int_{\R^3}|h(k-\nu)|^p\left(\int_{\S^2}|\widehat{\nu}\cdot\sigma|^{2(\alpha-1)}\,d\sigma\right)\,d\nu \\
 &\approx \int_{\R^3}|h(k-\nu)|^p\left(\int_0^1 x^{2(\alpha-1)}\,dx\right)\,d\nu\\
 &\approx \int_{\R^3}|h(k-\nu)|^p\,d\nu=\|h\|_{L^p}^p,
\end{align*}
where we used the fact $\alpha>1/2$ for the convergence of the integral in $x$.
Since $k$ is arbitrary, estimate \eqref{precollisional on k_1^*} follows.

Finally, to prove \eqref{estimate on integral of v1}, we fix $k\in\R^3$. Then we integrate in spherical coordinates using the identity
\begin{align}\label{integr-sph-identity}
    \int_{\S^2} F(k \cdot \sigma) \, d\sigma \approx \int_{0}^\pi F(\cos \theta) \, \sin \theta \, d\theta \approx \int_{-1}^1 F(x) \, dx
\end{align}
to obtain
\begin{align*}
    \int_{\R^3\times\S^2} \frac{|h(k_1)|^p}{(1-|\widehat{K}\cdot\sigma|)^\alpha}\,d\sigma\,k_1&= \int_{\R^3}|h(k_1)|^p\int_{\S^2}\frac{1}{(1-|\widehat{K}\cdot\sigma|)^\alpha}\,d\sigma\,dk_1\\
    &\approx\int_{\R^3}|h(k_1)|^p\int_{0}^1\frac{1}{(1-x)^\alpha}\,dx\,dk_1\\
    &\approx \|h \|_{L^p}^p,
\end{align*}
where we used the fact that $\alpha<1$ for the convergence of the integral in $x$. Since $k$ is arbitrary, estimate \eqref{estimate on integral of v1} follows.
\end{proof}

\subsection{An involutionary change of variables} Finally, we record a very useful change of variables property, which is the analog of the involutionary pre-post collisional velocities change of variables in the context of the Boltzmann equation. For a proof of this classical result, see e.g. Lemma 2.6 in \cite{AmLemain25}.

\begin{lemma} \label{involution}
 The map $T: (k,k_1,\sigma)\in\R^3\times\R^3\times\S^2\to (k^*,k_1^*,\eta)\in\R^3\times\R^3\times\S^2$ where
 \begin{equation}
 \begin{cases}
 k^*=K-\frac{|w|}{2}\sigma\\
 k_1^*=K+\frac{|w|}{2}\sigma\\
 \eta=-\widehat{w}
 \end{cases},\quad K=\frac{k+k_1}{2},\quad w=k-k_1,
\end{equation}
 is an involution of $\R^3\times\R^3\times\S^2$. Moreover, for any non-negative and continuously differentiable function $F:\R^3\times\R^3\times [-1,1]\to \R_+$, the following change of variables formula holds:
 \begin{equation}\label{change of variables formula}
\int_{\R^6\times\S^2}F(k^*,k_1^*,\widehat{w}\cdot\sigma)\,d\sigma\,dk_1\,dk=\int_{\R^6\times\S^2} F(k,k_1,\widehat{w}\cdot\sigma)\,d\sigma\,dk_1\,dk.  
 \end{equation}
 \end{lemma}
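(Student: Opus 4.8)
The plan is to establish the two claims in turn. The involution statement is purely algebraic, and once it is in hand the change of variables formula reduces to the single substantive point that $T$ preserves the reference measure $d\mu:=d\sigma\,dk_1\,dk$ on $\R^6\times\S^2$. For the involution property I would compute $T\circ T$ directly: given $(k,k_1,\sigma)$, the pair $(k^*,k_1^*)$ has center $K^*:=\tfrac{k^*+k_1^*}{2}=K$ and relative vector $w^*:=k^*-k_1^*=-|w|\sigma$, so $|w^*|=|w|$ and $\widehat{w^*}=-\sigma$ on the full-measure set $\{w\neq0\}$. Applying $T$ to $(k^*,k_1^*,\eta)$ with $\eta=-\widehat w$ therefore produces first component $K-\tfrac{|w|}{2}\eta=K+\tfrac{w}{2}=k$, second component $K+\tfrac{|w|}{2}\eta=K-\tfrac{w}{2}=k_1$, and new angular variable $-\widehat{w^*}=\sigma$; hence $T\circ T=\mathrm{id}$. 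Note that this does not by itself force $|\det DT|=1$, since the chain rule only yields $\det DT(x)\,\det DT(Tx)=1$, so the measure-preservation genuinely needs the computation below.

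For the change of variables formula, the decisive observation is that the third argument of $F$ is $T$-invariant: $\widehat{w^*}\cdot\eta=(-\sigma)\cdot(-\widehat w)=\widehat w\cdot\sigma$. Setting $H(a,b,\tau):=F\big(a,b,\widehat{a-b}\cdot\tau\big)$ — well defined and nonnegative since $\widehat{a-b}\cdot\tau\in[-1,1]$ — we get the pointwise identity $F(k^*,k_1^*,\widehat w\cdot\sigma)=H\big(T(k,k_1,\sigma)\big)$. Substituting $y=T(x)$ and using that $T$ is a $\mu$-preserving diffeomorphism of $\R^6\times\S^2$ off the null set $\{w=0\}$ (which $T$ maps to itself), the left-hand side of \eqref{change of variables formula} equals $\int_{\R^6\times\S^2}H(y)\,d\mu(y)=\int_{\R^6\times\S^2}F(k,k_1,\widehat w\cdot\sigma)\,d\mu$, which is the right-hand side.

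It remains to prove $T_*\mu=\mu$. I would first pass to center-of-mass and relative coordinates $K=\tfrac{k+k_1}{2}$, $w=k-k_1$: the linear map $(k,k_1)\mapsto(K,w)$ has determinant $\pm1$, so $dk\,dk_1=dK\,dw$, and likewise $dk^*\,dk_1^*=dK^*\,dw^*$. In these coordinates $T$ acts as the identity on $K$ and as $(w,\sigma)\mapsto(w^*,\eta)=(-|w|\sigma,-\widehat w)$ on $\R^3\times\S^2$. Writing $w=r\omega$ in polar coordinates, this last map is $(r,\omega,\sigma)\mapsto(r,-\sigma,-\omega)$: the magnitude is fixed and the two spherical variables are merely swapped and sent to their antipodes, hence it preserves $r^2\,dr\,d\omega\,d\sigma=dw\,d\sigma$, since the swap of the two $\S^2$-factors and the antipodal map on $\S^2$ each preserve surface measure. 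Undoing the two coordinate changes gives $dk^*\,dk_1^*\,d\eta=dk\,dk_1\,d\sigma$, i.e. $T_*\mu=\mu$.

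The main obstacle is this last step: one must keep careful track of the fact that the ambient measure is a product of Lebesgue measure on $\R^6$ with the surface measure on $\S^2$ — not Lebesgue measure in a flat chart — and one must explicitly discard the degenerate locus $\{w=0\}$, where $\widehat w$ and hence $T$ are undefined; this is harmless since it is $\mu$-null, but it should be stated. Everything else is a short direct computation, and the $C^1$, nonnegativity hypotheses on $F$ serve only to make all the integrals unambiguously defined.
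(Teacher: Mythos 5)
Your proof is correct. Note that the paper does not prove this lemma at all: it is stated as a classical fact and the proof is deferred to Lemma 2.6 of \cite{AmLemain25}, so there is no in-paper argument to compare against. Your route --- direct verification that $T\circ T=\mathrm{id}$ off the null set $\{w=0\}$, the observation that $\widehat{w^*}\cdot\eta=\widehat{w}\cdot\sigma$ is $T$-invariant, and measure preservation via the unimodular change $(k,k_1)\mapsto(K,w)$ followed by the polar factorization $dw\,d\sigma=r^{2}\,dr\,d\omega\,d\sigma$, under which $T$ acts as $(r,\omega,\sigma)\mapsto(r,-\sigma,-\omega)$ --- is precisely the standard pre--post collisional change-of-variables argument from the Boltzmann literature, and your explicit caution that the involution property alone does not give $|\det DT|=1$ is well placed. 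The write-up is complete and self-contained, which is more than the paper provides for this statement.
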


\section{Gain operators estimates } \label{sec:gain}
In this section we prove our main gain operator estimates. We stress that they are moment preserving, which will allow us to run a fixed point argument to prove the main result. At the technical level, the averaging in the angular variable quantified in Lemma \ref{collisional averging lemma} offsets the growth of the resonant cross-section.

\subsection{Estimate for $\mathcal{G}_0$} We prove the following estimate for $\mathcal{G}_0$:

\begin{proposition}\label{Prop G0}
  Let $r\geq 2$ and $l= 2-\frac{3}{r}+\delta$, where $0<\delta<1.$ Then the following estimate holds
  \begin{equation}\label{estimate for G0}
\|\l k\r^{l} \mathcal{G}_0[f,g,h]\|_{L^r}\lesssim \|\l k\r^{l} f\|_{L^r} \|\l k\r^l g\|_{L^r}\|\l k\r^l h\|_{L^r} .
  \end{equation}
\end{proposition}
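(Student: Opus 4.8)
The plan is to estimate $\mathcal{G}_0[f,g,h](k)=\frac{1}{4}f(k)\int_{\R^3\times\S^2}|w|\,g(k^*)h(k_1^*)\chi(\widehat{w}\cdot\sigma)\,d\sigma\,dk_1$ by pulling out the factor $f(k)$ and bounding the remaining integral (essentially $\mathcal{R}_0[g,h](k)$) pointwise in $k$ by a suitable weighted norm of $g$ and $h$. Since $f(k)$ multiplies the whole expression, after placing the weight $\l k\r^l$ on the full product it suffices to show
$$\l k\r^{l}\int_{\R^3\times\S^2}|w|\,|g(k^*)|\,|h(k_1^*)|\,\chi(\widehat w\cdot\sigma)\,d\sigma\,dk_1\lesssim \|\l\cdot\r^l g\|_{L^r}\|\l\cdot\r^l h\|_{L^r},$$
because then $\|\l k\r^l\mathcal{G}_0[f,g,h]\|_{L^r}\lesssim \|\l k\r^l f\|_{L^r}$ times that sup, which is exactly \eqref{estimate for G0}. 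Note $l=2-\tfrac3r+\delta>0$ for $r\ge 2$, $\delta>0$, so the weight $\l k\r^l$ is genuinely a growing weight to be absorbed.

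First I would insert weights: write $|g(k^*)|=\l k^*\r^{-l}g_l(k^*)$ and similarly for $h$, so the integrand becomes $|w|\,\l k^*\r^{-l}\l k_1^*\r^{-l}\,|g_l(k^*)|\,|h_l(k_1^*)|\,\chi(\widehat w\cdot\sigma)$. The key geometric input is that on the resonant manifold $|k^*|^2+|k_1^*|^2=E=|k|^2+|k_1|^2$, hence $\l k^*\r,\l k_1^*\r$ control $E$ from below, and $|w|\lesssim (1+E)^{1/2}\lesssim \l k^*\r\l k_1^*\r$ (more precisely $|w|^2\le E\le 2\max(|k^*|^2,|k_1^*|^2)$, and one splits into the two regions accordingly). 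Meanwhile $\l k\r^2\le 1+E$ as well, so $\l k\r^l\lesssim (1+E)^{l/2}$. Thus the prefactor $\l k\r^l|w|$ is dominated by $(1+E)^{(l+1)/2}$, which we need to beat using the decay $\l k^*\r^{-l}\l k_1^*\r^{-l}$ together with the angular averaging Lemma \ref{collisional averging lemma} and the averaging estimates \eqref{precollisional on k^*}--\eqref{precollisional on k_1^*}. The natural route: by Hölder in $(k_1,\sigma)$ with exponents $r$ and $r'$, bound the integral by $\big(\int |g_l(k^*)|^r\,(\text{weight})\,d\sigma dk_1\big)^{1/r}\big(\int |h_l(k_1^*)|^{r'}\cdots\big)^{1/r'}$ — but since $r'\le 2\le r$ this is delicate; more robustly, one symmetrizes and uses $|g_l(k^*)||h_l(k_1^*)|\le \tfrac12(|g_l(k^*)|^2+|h_l(k_1^*)|^2)$ only if $r=2$. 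For general $r$ I would instead apply Hölder to write the $dk_1d\sigma$ integral as a product of an $L^r_\sigma L^r_{k_1}$ norm of $g_l(k^*)$ (resp. $h_l(k_1^*)$) against the remaining weight in $L^{r'}$, then invoke \eqref{precollisional on k^*}/\eqref{precollisional on k_1^*} (which are $L^p$-to-$L^p$ bounds under the change of variables $\nu=R_\sigma^\pm(w)$) to convert $\int|g_l(k^*)|^r d\sigma dk_1$ into $\|g_l\|_{L^r}^r$, provided the leftover angular singularity $|\widehat R_\sigma^-(w)\cdot\sigma|^{-2\alpha}$ has exponent $\alpha<1$, and the coupled averaging \eqref{collisional avaraging coupled} to handle the $E$-weight on the $L^{r'}$ side.

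The main obstacle I anticipate is the bookkeeping of exponents: one must distribute the total weight $(1+E)^{(l+1)/2}$ between the $L^r$ factor (where the change of variables eats a power of $\l k^*\r$ or $\l k_1^*\r$ but also produces an angular Jacobian singularity $(\widehat\nu\cdot\sigma)^{-2}$ that must be tamed, forcing $\alpha<1$) and the $L^{r'}$ factor (where $\int_{\S^2}\l k^*\r^{-lr'}\l k_1^*\r^{-lr'}d\sigma$ must be summable, requiring $lr'>2$, i.e. exactly the condition $l>2/r'=2(1-1/r)=2-2/r$... which is why $l=2-3/r+\delta$ barely works once one also accounts for the $|w|$ and $\l k\r^l$ factors). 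Checking that $\delta<1$ and $r\ge 2$ make all these borderline integrals converge — in particular that the power of $(1+E)$ left over after the coupled averaging estimate \eqref{collisional avaraging coupled} is non-positive — is the crux. I would organize this as: (i) reduce to the pointwise bound on $\mathcal{R}_0[g,h]$; (ii) split $\{|k^*|^2>E/2\}\cup\{|k_1^*|^2>E/2\}$; (iii) on each piece apply Hölder + the relevant precollisional averaging estimate to one factor and the coupled/individual angular averaging to absorb the $E$ powers into the other; (iv) verify the exponent inequalities reduce precisely to $0<\delta<1/r$ (or $\delta>0$ when $r=\infty$, where one argues by direct estimation instead of Hölder).
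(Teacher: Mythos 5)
Your reduction gives the weight away to the wrong factor, and the resulting ``sufficient'' estimate is false in exactly the range $r>2$ that the proposition is meant to cover. You propose to prove $\sup_k \l k\r^{l}\int_{\R^3\times\S^2}|w|\,|g(k^*)|\,|h(k_1^*)|\chi(\widehat w\cdot\sigma)\,d\sigma\,dk_1\lesssim \|\l\cdot\r^l g\|_{L^r}\|\l\cdot\r^l h\|_{L^r}$, i.e.\ you keep the weight $\l k\r^l$ on the bilinear integral rather than attaching it to $f(k)$. This is a strictly stronger statement than what is needed, and it fails: take $|k|$ large, $g=\mathds{1}_{\{|x|\leq 3\}}$, and $h=\l \cdot\r^{-l}\mathds{1}_{S_k}$ where $S_k$ is the set of points $k+k_1$ with $|k_1|\approx |k|/2$ and $|k\cdot k_1|\lesssim |k|$ (volume $\approx |k|^2$, so $\|h_l\|_{L^r}\approx |k|^{2/r}$). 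For each such $k_1$ one has $|K|-|w|/2=O(1)$, so there is a spherical cap of $\sigma$'s of measure $\approx |k|^{-2}$ (on which $\chi(\widehat w\cdot\sigma)=1$) where $|k^*|\lesssim 1$ and $k_1^*\approx k+k_1$; since $|w|\approx|k|$, the integral is $\gtrsim |k|^2\cdot|k|\cdot|k|^{-2}\cdot|k|^{-l}=|k|^{1-l}$, so the weighted left-hand side is $\gtrsim |k|$, while the right-hand side is $\approx |k|^{2/r}$. Thus the claimed bound fails for every $r>2$ (in particular $r=\infty$), so the argument cannot be completed along these lines; your own exponent count in step (iv), which produces the condition $\delta<1/r$ instead of the stated $0<\delta<1$, is a symptom of the same problem (the restriction $\delta<1/r$ belongs to $\mathcal{G}_1$, not $\mathcal{G}_0$).

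The correct reduction, which is what the paper does, is to let $f$ absorb the weight: by H\"older ($L^r\times L^\infty$ in $k$), $\|\l k\r^l\mathcal{G}_0[f,g,h]\|_{L^r}\lesssim \|\l k\r^l f\|_{L^r}\,\sup_k\int|w|\,|g(k^*)||h(k_1^*)|\chi\,d\sigma\,dk_1$, so only the \emph{unweighted} sup of the bilinear integral is needed. That bound is then proved by splitting on $|k^*|^2>E/2$ versus $|k_1^*|^2>E/2$ (your step (ii) is the same), but instead of H\"older with exponents $r,r'$ in $(k_1,\sigma)$, the paper reduces $L^r$ to weighted $L^2$ via the embedding \eqref{embedding L2 to Lr} with split weights $l_0=\frac{1-\delta}{2}$, $l_1=\frac{1+\delta}{2}$ ($l_0+l_1=1$), and then applies Cauchy--Schwarz together with the lower bounds \eqref{lower bound on v* by v1*}--\eqref{lower bound on v1* by v*}, the Bobylev change of variables of Proposition \ref{chgvarkin}, and the angular averaging estimates \eqref{precollisional on k^*}--\eqref{precollisional on k_1^*}; this closes for all $0<\delta<1$, uniformly in $2\leq r<\infty$, with $r=\infty$ handled separately by the coupled averaging \eqref{collisional avaraging coupled} exactly as you suggest. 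If you repair your reduction (weight on $f$, bilinear piece in $L^\infty_k$) and replace the $r,r'$ H\"older step by this $L^2$ reduction, your outline matches the paper's proof.
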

\begin{proof} We treat the cases $2\leq r<\infty$ and $r=\infty$ separately.

\vspace{.3cm}
{\bf Case $r<\infty$.}
By the resonant condition $|k^*|^2+|k_1^*|^2=E$, we have  either $|k^*|^2\geq E/2$ or $|k_1^*|^2\geq E/2$, where $E=|k|^2+|k_1|^2$. Ignoring the measure zero set where $|k^*|^2=|k_1^*|^2=E/2$, we obtain
\begin{equation}\label{factorization of G0}
\mathcal{G}_0[f,g,h]=\frac{1}{4}f\left(\mathcal{Q}_0[g,h]+\mathcal{Q}_1[g,h]\right),
\end{equation}
where
\begin{align*}
\mathcal{Q}_0[g,h]&=\int_{\R^3\times\S^2}|w| |g(k^*)|\,|h(k_1^*)|\chi(\widehat{w}\cdot\sigma)\mathds{1}_{|k^*|>E/2}\,d\sigma\,dk_1,\\
\mathcal{Q}_1[g,h]&=\int_{\R^3\times\S^2}|w| |g(k^*)|\,|h(k_1^*)|\chi(\widehat{w}\cdot\sigma)\mathds{1}_{|k_1^*|>E/2}\,d\sigma\,dk_1.
\end{align*}
We clearly have
\begin{equation}\label{reduction to Linf G0}
\|\l k\r^l \mathcal{G}_0[f,g,h]\|_{L^r}\lesssim \|f_l\|_{L^r}\left(\|\mathcal{Q}_0[g,h]\|_{L^\infty}+\|\mathcal{Q}_1[g,h]\|_{L^\infty}\right),    
\end{equation}
so it suffices to estimate $\|\mathcal{Q}_0[g,h]\|_{L^\infty},\,\|\mathcal{Q}_1[g,h]\|_{L^\infty}$.

Define $l_0=\frac{1-\delta}{2}$, $l_1=\frac{1+\delta}{2}$. Clearly $l_0+l_1=1$. Moreover, we note that for any function $\psi$, H\"older's inequality implies
\begin{equation}\label{embedding L2 to Lr}
\|\psi_{l_0}\|_{L^2}\leq\|\psi_{1/2}\|_{L^2}\leq\|\psi_{l_1}\|_{L^2}\lesssim \|\l k\r^{\frac{1+\delta}{2}+3(\frac{1}{2}-\frac{1}{r})+\frac{\delta}{2}}\psi\|_{L^r}=\|\l k\r^{2-\frac{3}{r}+\delta}\psi\|_{L^r}=\|\psi_{l}\|_{L^r}.    
\end{equation}

\subsubsection*{Estimate for $\mathcal{Q}_0$} Fix $k\in\R^3$.
We use \eqref{lower bound on v1* by v*} and the inequality $|w|\leq |k|+|k_1|\lesssim E^{1/2}$ to bound
\begin{align*}
\frac{|w|}{\l k^*\r^{l_1}\l k_1^*\r^{l_0}}\mathds{1}_{|k^*|^2>E/2}\lesssim \frac{E^{1/2}}{\l k^*\r(1-|\widehat{k^*}\cdot\sigma|)^{l_0/2}}\mathds{1}_{|k^*|^2>E/2}\lesssim \frac{1}{(1-|\widehat{k^*}\cdot\sigma|)^{l_0/2}}.    
\end{align*}
Let $\alpha=1/2+\delta/4$, and using the above bound followed by the Cauchy-Schwarz inequality, we obtain
\begin{align*}
|\mathcal{Q}_0[g,h](k)|&\leq \int_{\R^3\times\S^2}\frac{|g_{l_1}(k^*)||h_{l_0}(k_1^*)|\chi(\widehat{w}\cdot\sigma)}{(1-|\widehat{k^*}\cdot\sigma|)^{l_0/2}}\,d\sigma\,dk_1\\
&=\int_{\R^3\times\S^2}\frac{|g_{l_1}(k^*)|\chi(\widehat{w}\cdot\sigma)}{|\widehat{R}_\sigma^-(w)\cdot\sigma|^{\alpha}(1-|\widehat{k^*}\cdot\sigma|)^{l_0/2}}\,\left(|\widehat{R}_\sigma^-(w)\cdot\sigma|^{\alpha}|h_{l_0}(k_1^*)|\right)\,d\sigma\,dk_1\\
&\leq \left(\int_{\R^3\times\S^2}\frac{|g_{l_1}(k^*)|^2\chi(\widehat{w}\cdot\sigma)}{|\widehat{R}_\sigma^-(w)\cdot\sigma|^{2\alpha}(1-|\widehat{k^*}\cdot\sigma|)^{l_0}}\,d\sigma\,dk_1\right)^{1/2}\left(\int_{\R^3\times\S^2}|\widehat{R}_\sigma^-(w)\cdot\sigma|^{2\alpha}|h_{l_0}(k_1^*)|^2\,d\sigma\,dk_1\right)^{1/2}\\
&\lesssim A^{1/2}\|h_{l_0}\|_{L^2},
\end{align*}
where 
\begin{align*}
    A & := \int_{\R^3\times\S^2}\frac{|g_{l_1}(k^*)|^2\chi(\widehat{w}\cdot\sigma)}{|\widehat{R}_\sigma^-(w)\cdot\sigma|^{2\alpha}(1-|\widehat{k^*}\cdot\sigma|)^{l_0}}\,d\sigma\,dk_1.
\end{align*}
For the last inequality, we used \eqref{precollisional on k_1^*} since $\alpha>1/2$.

It remains to estimate the integral $A$. For this, we  use \eqref{R+ R- v}, \eqref{angle} \eqref{R+ R- relation} and the substitution $y:=w=k-k_1$, to write
\begin{align*}
A&=\int_{\R^3\times\S^2}\frac{|g_{l_1}(k-R_\sigma^+(w))|^2\chi(2|\widehat{R}_\sigma^+(w)\cdot\sigma|^2-1)}{(1-|\widehat{R}_\sigma^+(w)\cdot\sigma|^2)^\alpha\left(1-|(\widehat{k-R_\sigma^+(w)})\cdot\sigma|\right)^{l_0}}\,d\sigma\,dk_1\\
&=\int_{\R^3\times\S^2}\frac{|g_{l_1}(k-R_\sigma^+(y))|^2\chi(2|\widehat{R}_\sigma^+(y)\cdot\sigma|^2-1)}{(1-|\widehat{R}_\sigma^+(y)\cdot\sigma|^2)^\alpha\left(1-|(\widehat{k-R_\sigma^+(y)})\cdot\sigma|\right)^{l_0}}\,d\sigma\,dy. 
\end{align*}
Now, using Proposition \ref{chgvarkin} to substitute $\nu:=R_\sigma^+(y)$, we obtain
\begin{align*}
A&= \int_{\R^3} |g_{l_1}(k-\nu)|^2\int_{\S^2}\frac{\chi(2|\widehat{\nu}\cdot\sigma)|^2-1)}{|\widehat{\nu}\cdot\sigma|^2(1-|\widehat{\nu}\cdot\sigma|^2)^\alpha\left(1-|\widehat{(k-\nu)}\cdot\sigma|\right)^{l_0}}\,d\sigma\,d\nu\\
&\lesssim \int_{\R^3} |g_{l_1}(k-\nu)|^2\int_{\S^2}\frac{1}{(1-|\widehat{\nu}\cdot\sigma|^2)^\alpha\left(1-|\widehat{(k-\nu)}\cdot\sigma|\right)^{l_0}}\,d\sigma\,d\nu\\
&\leq \int_{\R^3}|g_{l_1}(k-\nu)|^2\int_{\S^2}\frac{1}{(1-|\widehat{\nu}\cdot\sigma|^2)^{\alpha+l_0}}+\frac{1}{\left(1-|\widehat{(k-\nu)}\cdot\sigma|\right)^{\alpha+l_0}}\,d\sigma\,d\nu,
\end{align*}
where for the first bound we used the fact $\frac{\chi(2x^2-1)}{x^2}\lesssim 1$, $x\neq 0$
and for the second bound we used the standard convexity inequality 
\begin{equation}\label{standart singularity ineq}
\frac{1}{|a|^\lambda|b|^\mu}\leq\frac{1}{|a|^{\lambda+\mu}}+\frac{1}{|b|^{\lambda+\mu}},\quad a,b\neq 0,\quad \lambda,\mu>0.    
\end{equation}

Now integrating in spherical coordinates and using the fact that $\alpha+l_0=1-\delta/4$, we obtain
\begin{align*}
&\int_{\S^2}\frac{1}{(1-|\widehat{\nu}\cdot\sigma|^2)^{\alpha+l_0}}\,d\sigma\approx \int_0^1 \frac{1}{(1-x^2)^{1-\delta/4}}\,dx\leq \int_0^1\frac{1}{(1-x)^{1-\delta/4}}\,dx\approx 1, \\
&\int_{\S^2} \frac{1}{\left(1-|\widehat{(k-\nu)}\cdot\sigma|\right)^{\alpha+l_0}}\,d\sigma\approx \int_0^1\frac{1}{(1-x)^{1-\delta/4}}\,dx\approx 1.
\end{align*}
Hence 
$$A\lesssim \int_{\R^3}|g_{l_1}(k-\nu)|^2\,d\nu=\|g_{l_1}\|_{L^2}^2.$$
Since $k$ is arbitrary, we conclude
\begin{equation}\label{Q0 Linf bound}
\|\mathcal{Q}_0[g,h]\|_{L^\infty}\lesssim \|g_{l_1}\|_{L^2}\|h_{l_0}\|_{L^2}\lesssim \|g_l\|_{L^r}\|h_l\|_{L^r},    
\end{equation}
where for the last inequality we use \eqref{embedding L2 to Lr}.

\subsubsection*{Estimate for $\mathcal{Q}_1$} Fix $k\in\R^3$.
We use the inequality $\l k^*\r\geq \l k_1^*\r(1-|\widehat{k_1^*}\cdot\sigma|)$ (which follows by \eqref{lower bound on v* by v1*}), the fact $l_0+l_1=1$, and the inequality $|w|\leq |k|+|k_1|\lesssim E^{1/2}$, to bound
\begin{align*}
\frac{|w|}{\l k^*\r^{l_0}\l k_1^*\r^{l_1}}\mathds{1}_{|k_1^*|^2>E/2}\lesssim \frac{E^{1/2}}{\l k_1^*\r(1-|\widehat{k_1^*}\cdot\sigma|)^{l_0/2}}\mathds{1}_{|k_1^*|^2>E/2}\lesssim \frac{1}{(1-|\widehat{k_1^*}\cdot\sigma|)^{l_0/2}}.    
\end{align*}
Writing $\alpha=1-\delta/4$, and using the bound above followed by the Cauchy-Schwarz inequality, we obtain
\begin{align*}
|\mathcal{Q}_1[g,h](k)|&\leq \int_{\R^3\times\S^2}\frac{|g_{l_0}(k^*)||h_{l_1}(k_1^*)|\chi(\widehat{w}\cdot\sigma)}{(1-|\widehat{k_1^*}\cdot\sigma|)^{l_0/2}}\,\,d\sigma\,dk_1\\
&=\int_{\R^3\times\S^2}\frac{|g_{l_0}(k^*)|\chi(\widehat{w}\cdot\sigma)}{|\widehat{R}_\sigma^-(w)\cdot\sigma|^\alpha}\frac{|\widehat{R}_\sigma^-(w)\cdot\sigma|^\alpha|h_{l_1}(k_1^*)|}{(1-|\widehat{k_1^*}\cdot\sigma|)^{l_0/2}}\,\,d\sigma\,dk_1\\
&\leq \left(\int_{\R^3\times\S^2}\frac{|g_{l_0}(k^*)|^2\chi(\widehat{w}\cdot\sigma)}{|\widehat{R}_\sigma^-(w)\cdot\sigma|^{2\alpha}}\,d\sigma\,dk_1\right)^{1/2}\left(\int_{\R^3\times\S^2}\frac{|\widehat{R}_\sigma^-(w)\cdot\sigma|^{2\alpha}|h_{l_1}(k_1^*)|^2}{(1-|\widehat{k_1^*}\cdot\sigma|)^{l_0}}\,d\sigma\,dk_1\right)^{1/2}\\
&\lesssim \|g_{l_0}\|_{L^2}\,B^{1/2},
\end{align*}
where 
\begin{align*}
    B:= \int_{\R^3\times\S^2}\frac{|\widehat{R}_\sigma^-(w)\cdot\sigma|^{2\alpha}|h_{l_1}(k_1^*)|^2}{(1-|\widehat{k_1^*}\cdot\sigma|)^{l_0}}\,d\sigma\,dk_1.
\end{align*}
For the last inequality, we used \eqref{precollisional on k^*} since $\alpha<1$.

It remains to estimate the integral $B$. To do so, we first use \eqref{R+ R- v} and the substitution $y:=w=k-k_1$ to write
\begin{align*}
B&=\int_{\R^3\times\S^2}\frac{|\widehat{R}_\sigma^-(w)\cdot\sigma|^{2\alpha}|h_{l_1}(k-R_\sigma^-(w))|^2}{\left(1-|(\widehat{k-R_\sigma^-(w)})\cdot\sigma|\right)^{l_0}}\,d\sigma\,dk_1\\
&=\int_{\R^3\times\S^2}\frac{|\widehat{R}_\sigma^-(y)\cdot\sigma|^{2\alpha}|h_{l_1}(k-R_\sigma^-(y))|^2}{\left(1-|(\widehat{k-R_\sigma^-(y)})\cdot\sigma|\right)^{l_0}}\,d\sigma\,dy.
\end{align*}
Now, using Proposition \ref{chgvarkin} to substitute $\nu:=R_\sigma^-(y)$, we obtain
\begin{align*}
B&= \int_{\R^3} |h_{l_1}(k-\nu)|^2\int_{\S^2}\frac{1}{|\widehat{\nu}\cdot\sigma|^{2(1-\alpha)}\left(1-|\widehat{(k-\nu)}\cdot\sigma|\right)^{l_0}}\,d\sigma\,d\nu\\
&\leq \int_{\R^3}|h_{l_1}(k-\nu)|^2\int_{\S^2}\frac{1}{|\widehat{\nu}\cdot\sigma|^{2(1-\alpha)+l_0}}+\frac{1}{\left(1-|\widehat{(k-\nu)}\cdot\sigma|\right)^{2(1-\alpha)+l_0}}\,d\sigma\,d\nu,
\end{align*}
where for the last step we use inequality \eqref{standart singularity ineq}.
Now integrating in spherical coordinates and using the fact that $2(1-\alpha)+l_0=1/2$, we obtain
\begin{align*}
&\int_{\S^2}\frac{1}{|\widehat{\nu}\cdot\sigma|^{2(1-\alpha)+l_0}}\,d\sigma\approx \int_0^1 \frac{1}{x^{1/2}}\,dx\approx 1, \\
&\int_{\S^2} \frac{1}{\left(1-|\widehat{(k-\nu)}\cdot\sigma|\right)^{2(1-\alpha)+l_0}}\,d\sigma\approx \int_0^1\frac{1}{(1-x)^{1/2}}\,dx\approx 1.
\end{align*}
Hence 
$$B\lesssim \int_{\R^3}|h_{l_1}(k-\nu)|^2\,d\nu=\|h_{l_1}\|_{L^2}^2.$$
Since $k$ is arbitrary, we conclude
\begin{equation}\label{Q1 Linf bound}
\|\mathcal{Q}_1[g,h]\|_{L^\infty}\lesssim \|g_{l_0}\|_{L^2}\|h_{l_1}\|_{L^2}\lesssim \|g_l\|_{L^r}\|h_l\|_{L^r},    
\end{equation}
where for the last inequality we use \eqref{embedding L2 to Lr}.

Combining \eqref{Q0 Linf bound}, \eqref{Q1 Linf bound} with \eqref{reduction to Linf G0}, estimate \eqref{estimate for G0} for $2\leq r<\infty$ follows.

\vspace{.3cm}
{\bf Case $r=\infty$.} Let $k\in\R^3$. Since $r=\infty$, we have $l=2+\delta$. Using \eqref{collisional avaraging coupled} and the inequality $|w|\leq |k|+|k_1|\lesssim E^{1/2}$, we obtain
\begin{align*}
|\l k\r^l\mathcal{G}_0[f,g,h](k)|&\leq \|f_l\|_{L^\infty}\|g_l\|_{L^\infty}\|h_l\|_{L^\infty}\int_{\R^3}|w|\int_{\S^2}\frac{1}{\l k^*\r^l\l k_1^*\r^l}\,d\sigma\,dk_1\\
&\lesssim \|f_l\|_{L^\infty}\|g_l\|_{L^\infty}\|h_l\|_{L^\infty}\int_{\R^3}\frac{|w|}{E^{1+l/2}}\,dk_1\\
&\lesssim \|f_l\|_{L^\infty}\|g_l\|_{L^\infty}\|h_l\|_{L^\infty}\int_{\R^3}\frac{1}{\l k_1\r^{3+\delta}}\,dk_1\\
&\lesssim \|f_l\|_{L^\infty}\|g_l\|_{L^\infty}\|h_l\|_{L^\infty}.
\end{align*}
Since $k$ is arbitrary estimate \eqref{estimate for G0} follows.
\end{proof}

\subsection{Estimate for $\mathcal{G}_1$}\label{Prop G1} We  prove the following estimate for $\mathcal{G}_1:$
\begin{proposition}
  Let $r\geq 2$ and $l= 2-\frac{3}{r}+\delta$, where $0<\delta<1/r$ if $r < \infty,$ and $\delta>0$ if $r = \infty.$ Then the following estimate holds
  \begin{equation}\label{estimate on G1}
\|\l k\r^l \mathcal{G}_1[f,g,h]\|_{L^r}\lesssim \|\l k\r^l f\|_{L^r} \|\l k\r^l g\|_{L^r}\|\l k\r^l h\|_{L^r}.
  \end{equation}
\end{proposition}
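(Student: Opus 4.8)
The plan is to estimate $\mathcal{G}_1$ by the same template used for $\mathcal{G}_0$, but now placing the extra factor $f(k_1)$ into the angular integral. First I would use the resonant condition $|k^*|^2+|k_1^*|^2 = E$ to split $\mathcal{G}_1 = \mathcal{G}_1^{(0)} + \mathcal{G}_1^{(1)}$ according to whether $|k^*|^2 > E/2$ or $|k_1^*|^2 > E/2$, ignoring the measure-zero set where equality holds. On each piece I redistribute the weight $\l k\r^l$: on the region $|k^*|^2 \gtrsim E$ I bound $\l k\r^l \lesssim \l k^*\r^l$ (using $|k|^2 \le E \lesssim |k^*|^2$), and similarly $\l k\r^l \lesssim \l k_1^*\r^l$ on the other region. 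This reduces matters to controlling $L^r_k$ norms of angular integrals of the form $\int |w|\, |f(k_1)|\, |g(k^*)|\,\l k^*\r^{-?}\, |h(k_1^*)|\,\l k_1^*\r^{-?}\,\chi(\widehat w\cdot\sigma)\,d\sigma\,dk_1$, where the sum of the two weight exponents is $l$, exactly as in Proposition \ref{Prop G0}.

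The new difficulty compared to $\mathcal{G}_0$ is the presence of $f(k_1)$ inside the integral, which prevents the simple ``pull $f$ out of the $L^r_k$ norm'' trick. For $2 \le r < \infty$ I would handle this by a three-fold Hölder split in the angular integral: write $|w| \lesssim E^{1/2}$ and absorb it together with the lower bounds \eqref{lower bound on v* by v1*}, \eqref{lower bound on v1* by v*} to trade powers of $\l k^*\r$ or $\l k_1^*\r$ for negative powers of $(1-|\widehat{k^*}\cdot\sigma|)$ or $(1-|\widehat{k_1^*}\cdot\sigma|)$; then split the integrand as a product of three factors, one essentially $|f_{l_?}(k_1)|$, one essentially $|g_{l_?}(k^*)|$ times a $\sigma$-singular weight, one essentially $|h_{l_?}(k_1^*)|$ times a $\sigma$-singular weight, and apply Hölder with three exponents adapted to $r$. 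After the Hölder step, the $g$ and $h$ factors are handled exactly as $A$ and $B$ in the proof of Proposition \ref{Prop G0} via the Bobylev change of variables $\nu = R_\sigma^\pm(w)$ (Proposition \ref{chgvarkin}) together with estimates \eqref{precollisional on k^*}, \eqref{precollisional on k_1^*}; the $f$ factor is handled by \eqref{estimate on integral of v1}, for which I would use the geometric bound \eqref{lower bound on v1} relating $\l k_1\r$ to $\l k\r$ up to the singularity $(1-|\widehat{(k-2R_\sigma^\epsilon(w))}\cdot\sigma|)$, so that the $k_1$-weight is converted into an integrable angular singularity after the change of variables. The exponent arithmetic is where the stricter hypothesis $\delta < 1/r$ (rather than $\delta<1$) enters: one needs all three angular singularities produced by the Hölder split to be integrable simultaneously, which forces the total budget of singular exponents to stay below the thresholds in \eqref{precollisional on k^*}, \eqref{precollisional on k_1^*}, \eqref{estimate on integral of v1}, and this is exactly tight at $\delta = 1/r$.

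For the endpoint $r = \infty$, I would argue directly: with $l = 2+\delta$, pull $\|f_l\|_{L^\infty}\|g_l\|_{L^\infty}\|h_l\|_{L^\infty}$ out and bound $\l k\r^l \mathcal{G}_1[f,g,h](k) \lesssim \|f_l\|_{L^\infty}\|g_l\|_{L^\infty}\|h_l\|_{L^\infty} \int_{\R^3} \frac{|w|\,\l k\r^l}{\l k_1\r^l} \int_{\S^2} \frac{d\sigma}{\l k^*\r^l \l k_1^*\r^l}\,dk_1$. Using \eqref{collisional avaraging coupled} the inner integral is $\lesssim (1+E)^{-1-l/2}$; combined with $|w| \lesssim E^{1/2}$, $\l k\r^l \lesssim \l k^*\r^l$-type bounds are not needed here since $\l k\r^l \lesssim (1+E)^{l/2}$ directly, and one is left with $\int_{\R^3} \frac{dk_1}{\l k_1\r^l (1+E)}$, which converges because $l + 2 = 4+\delta > 3$. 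This is even easier than the $\mathcal{G}_0$ endpoint.

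The main obstacle is the exponent bookkeeping in the $r<\infty$ case: one must choose the Hölder exponents and the split of $l$ into $l_0 + l_1$ (or a three-way split) so that, after trading magnitude weights for angular singularities via \eqref{lower bound on v1}, \eqref{lower bound on v* by v1*}, \eqref{lower bound on v1* by v*}, every resulting $\sigma$-integral converges — and verifying that the constraint $\delta < 1/r$ is precisely what makes this possible is the crux. Everything else (the Bobylev change of variables, the reduction to $L^r$-norms of convolutions, the endpoint case) is routine given the toolbox of Section \ref{sec:tools} and the proof of Proposition \ref{Prop G0}.
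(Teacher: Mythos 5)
Your $r=\infty$ argument is essentially the paper's (up to a harmless arithmetic slip: after using \eqref{collisional avaraging coupled} and $|w|\lesssim E^{1/2}$ the leftover factor is $(1+E)^{-1/2}$, not $(1+E)^{-1}$, which still yields $\int \l k_1\r^{-3-\delta}\,dk_1<\infty$). The $r<\infty$ case, however, has a genuine gap: your scheme never explains where the $L^r_k$ integrability of the output comes from. In $\mathcal{G}_1$ none of the three functions is evaluated at $k$; they sit at $k_1$, $k^*$, $k_1^*$. Every tool you invoke after the three-fold H\"older split --- \eqref{precollisional on k^*}, \eqref{precollisional on k_1^*}, \eqref{estimate on integral of v1}, and the $A$/$B$ computations from Proposition \ref{Prop G0} --- is a $\sup_k$ estimate that bounds the $(k_1,\sigma)$-integral by a full $L^p(\R^3)$ norm of the input, independent of $k$. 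Applying them to all three factors washes out the $k$-dependence completely and can only give an $L^\infty_k$ bound on $\l k\r^l\mathcal{G}_1[f,g,h]$, not the claimed $L^r_k$ bound; unlike $\mathcal{G}_0$, there is no prefactor $f(k)$ outside the integral to supply the decay, and your plan retains no residual power $\l k\r^{-s}$ with $s>3/r$ (nor is it clear the weight budget under $\delta<1/r$ would permit one within this template). The ``exponent bookkeeping'' you defer is precisely where the argument lives, and the route you sketch does not close as stated.

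The paper's proof resolves this by a structurally different step that your proposal is missing. It uses the single pointwise bound $\l k\r^l|w|\,\l k^*\r^{-l}\l k_1^*\r^{-l}\lesssim \l k_1\r^{1-l}(1-|\widehat K\cdot\sigma|)^{-l/2}$ (no splitting into $|k^*|^2>E/2$ versus $|k_1^*|^2>E/2$, and no Bobylev change of variables here), then applies H\"older in $(k_1,\sigma)$ with only two exponents $r'$ and $r$: the $r'$-factor $f_{1-l}(k_1)$ together with the angular singularity is handled by \eqref{estimate on integral of v1}, and this is exactly where $\delta<1/r$ enters, via $r'l<2$ so that the exponent $r'l/2<1$ is integrable; the $r$-factor $\big(\int|g_l(k^*)h_l(k_1^*)|^r\,d\sigma\,dk_1\big)^{1/r}$ is kept as a function of $k$. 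The $L^r_k$ norm is then obtained by raising to the $r$-th power, integrating in $k$, and invoking the pre--post collisional involution, Lemma \ref{involution} and formula \eqref{change of variables formula}, to identify $\int|g_l(k^*)h_l(k_1^*)|^r\,d\sigma\,dk_1\,dk$ with $\|g_l\|_{L^r}^r\|h_l\|_{L^r}^r$. This integration in $k$ combined with the involutionary change of variables on $\R^3\times\R^3\times\S^2$ is the key idea that transfers the $k$-integrability onto $g$ and $h$, and it (or some substitute such as a duality pairing over $k$) is what your proposal needs but does not contain.
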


\begin{proof}
We treat again the cases $2\leq r<\infty$ and $r=\infty$ separately.

{\bf Case $ r<\infty$.}
Since $r<\infty$ and $\delta<1/r$, we have $l=2-\frac{3}{r}+\delta<2$. We also have
\begin{equation}\label{r'l<2}
r'l=r'\left(2-\frac{3}{r}+\delta\right)=\frac{r}{r-1} \left(2-\frac{3}{r}+\delta\right)=\frac{2r-3+\delta r}{r-1}=2-\frac{1-\delta r}{r-1}<2.    
\end{equation}

We use the inequality
\begin{align}\label{lower-bound-product}
\l k^*\r\l k_1^*\r \gtrsim (1+E)(1-|\widehat{K}\cdot\sigma|)^{1/2}.
\end{align}

Indeed, by conservation of energy $\vert k_1^* \vert^2 + \vert k^* \vert ^2 = E,$ thus $\vert k_1^* \vert^2 \gtrsim E$ or $\vert k^* \vert^2 \gtrsim E.$ As a result $\max \big \lbrace \langle k_1^* \rangle, \langle k^* \rangle \big \rbrace \gtrsim (1+E)^{1/2}.$ Moreover, by \eqref{k^* lower bound by E}, we have $\min \big \lbrace \langle k_1^* \rangle, \langle k^* \rangle \big \rbrace \gtrsim (1+E)^{1/2} \big(1 - \vert \widehat{K} \cdot \sigma \vert \big)^{1/2}$ and \eqref{lower-bound-product} follows. 

\color{black}
Applying \eqref{lower-bound-product} then yields
\begin{align*}
\frac{\l k\r^l |w|}{\l k^*\r^{l} \l k_1^*\r^{l}}\lesssim \frac{1}{(1+E)^{\frac{l-1}{2}}(1-|\widehat{K}\cdot\sigma|)^{l/2}} \lesssim    \frac{1}{\l k_1\r^{l-1}(1-|\widehat{K}\cdot\sigma|)^{l/2}}.
\end{align*}
Using the above bound, we obtain
\begin{align}
\l k\r^l |\mathcal{G}_{1}[f,g,h]|&\lesssim \int_{\R^3\times\S^2}\frac{1}{(1-|\widehat{K}\cdot\sigma|)^{l/2}}|f_{1-l}(k_1)g_{l}(k^*)h_l(k_1^*)|\chi(\widehat{w}\cdot\sigma)\,d\sigma\,dk_1\notag\\
&\leq \left(\int_{\R^3\times\S^2} \frac{1}{(1-|\widehat{K}\cdot\sigma|)^{r'l/2}}|f_{1-l}(k_1)|^{r'}\,d\sigma\,dk_1\right)^{1/r'}\notag\\
&\hspace{1cm}\times \left(\int_{\R^3\times\S^2}|g_l(k^*)h_l(k_1^*)|^{r}\,d\sigma\,dk_1\right)^{1/r}\notag\\
&\lesssim \|f_{1-l}\|_{L^{r'}} \left(\int_{\R^3\times\S^2}|g_l(k^*)h_l(k_1^*)|^{r}\,d\sigma\,dk_1\right)^{1/r},\notag\\
&\lesssim \| f_l\|_{L^r} \left(\int_{\R^3\times\S^2}|g_l(k^*)h_l(k_1^*)|^{r}\,d\sigma\,dk_1\right)^{1/r}\label{pointwise bound on G1},
\end{align}
where for the second to last line we used the fact $r'l<2$ and \eqref{estimate on integral of v1}, and for the last line we used
$$ \|f_{1-l}\|_{L^{r'}}=\|\l k\r^{1-l} f\|_{L^{r'}}\lesssim \|\l k\r^{1-l +3\left(1-\frac{2}{r}\right)+\delta}f\|_{L^r}=\|\l k\r^{2-\frac{3}{r}}f\|_{L^r}\lesssim \| f_l\|_{L^r},$$
which follows by H\"older's inequality.

Raising \eqref{pointwise bound on G1} to the $r$-th power, integrating, and using \eqref{change of variables formula}, we obtain
\begin{align*}
\|\l k\r^l \mathcal{G}_1[f,g,h]\|_{L^r}^r&\lesssim \|f_l\|_{L^r}^r\int_{\R^6\times\S^2}|g_l(k^*) h_l(k_1^*)|^r\,d\sigma\,dk_1\,dk\\
&=\|f_l\|_{L^r}^r\int_{\R^6}|g_l(k) h_l(k_1)|^r\,\,dk_1\,dk\\
&= \|f_l\|_{L^r}^r\|g_l\|_{L^r}^r\|h_l\|_{L^r}^r,
\end{align*}
and \eqref{estimate on G1} follows.

{\bf Case $r=\infty$.} Let $k\in\R^3$.  Using \eqref{collisional avaraging coupled} and the inequality $|w|\lesssim E^{1/2}$, we bound 

\begin{align*}
\l k\r^l |\mathcal{G}_1[f,g,h](k)|
&= \|f_l\|_{L^\infty} \|g_l\|_{L^\infty} \|h_l\|_{L^\infty}\int_{\R^3}\frac{\l k\r^l |w|}{\l k_1\r^{l}}\int_{\S^2}\frac{1}{\l k^*\r^l \l k_1^*\r^l}\,d\sigma\,dk_1\\
&\lesssim \|f_l\|_{L^\infty} \|g_l\|_{L^\infty} \|h_l\|_{L^\infty} \int_{\R^3}\frac{\l k\r^l |w|}{\l k_1\r^{l}(1+E)^{1+l/2}}\,dk_1\\
 &\lesssim \|f_l\|_{L^\infty} \|g_l\|_{L^\infty} \|h_l\|_{L^\infty} \int_{\R^3}\frac{1}{\l k_1\r^{3+\delta}}\,dk_1\\
&\lesssim \|f_l\|_{L^\infty} \|g_l\|_{L^\infty} \|h_l\|_{L^\infty}.
\end{align*}
Since $k$ is arbitrary, estimate \eqref{estimate on G1} follows.
\end{proof}

\section{Loss operators estimates} \label{sec:loss}
In this section, we prove moment preserving trilinear estimates for the loss operators. Contrary to the Boltzmann equation, such a moment gain is present for the loss. As a result, the local well-posedness result is fully perturbative, in contrast to the Boltzmann equation.

\subsection{Estimate for $\mathcal{L}_0$} We  prove the following estimate for $\mathcal{L}_0$:

\begin{proposition}\label{Prop L0}
Let $r\geq 2$ and $l=2-\frac{3}{r}+\delta$, where $0<\delta<1$. Then, the following estimate holds
\begin{align}
\|\l k\r^l \mathcal{L}_0[f,g,h]\|_{L^r}&\lesssim \|\l k\r^{l} f\|_{L^r}\|\l k\r^l g\|_{L^r}\|\l k\r^l h\|_{L^r}\label{estimate for L_0}.
\end{align}    
\end{proposition}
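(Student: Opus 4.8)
The plan is to exploit the factorization $\mathcal{L}_0[f,g,h] = f \cdot \mathcal{R}_0[g,h]$ together with the moment-gain structure of $\mathcal{R}_0$, so that the whole estimate reduces to an $L^\infty$ bound on a suitably weighted collision frequency. Writing $f_l = \l k\r^l f$, we immediately have the pointwise bound $|\l k\r^l \mathcal{L}_0[f,g,h](k)| = |f_l(k)| \cdot |\mathcal{R}_0[g,h](k)|$, hence $\|\l k\r^l \mathcal{L}_0[f,g,h]\|_{L^r} \le \|f_l\|_{L^r} \, \|\mathcal{R}_0[g,h]\|_{L^\infty}$. So the real content is to prove the bilinear estimate $\|\mathcal{R}_0[g,h]\|_{L^\infty} \lesssim \|g_l\|_{L^r}\|h_l\|_{L^r}$, which is where the cross-section growth $|w|$ must be absorbed. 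Recall $\mathcal{R}_0[g,h](k) = \frac14 \int_{\R^3\times\S^2} |w|\, g(k_1) h(k^*) \chi(\widehat{w}\cdot\sigma)\, d\sigma\, dk_1$; note that here the arguments are $k_1$ (a plain integration variable, no reflection) and $k^*$ (a Bobylev-type reflected variable), which is the configuration already handled for the analogous piece inside $\mathcal{G}_0$.

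For $2 \le r < \infty$, I would follow the template of the $\mathcal{Q}_0$ estimate in the proof of Proposition \ref{Prop G0}. Fix $k\in\R^3$ and insert the weights: write $g(k_1) = \l k_1\r^{-l} g_l(k_1)$ and $h(k^*) = \l k^*\r^{-l} h_l(k^*)$. The key is to control $|w| \l k_1\r^{-l}\l k^*\r^{-l}$. Using the resonant identity and the lower bound \eqref{k^* lower bound by E} (or, depending on which frequency dominates, \eqref{lower bound on v* by v1*}), together with $|w| \le |k|+|k_1| \lesssim E^{1/2}$, one trades the $|w|$ growth for a negative power of $E \approx \max(\l k\r^2, \l k_1\r^2)$ at the cost of a singularity in $1 - |\widehat{K}\cdot\sigma|$ (or in $1-|\widehat{k^*}\cdot\sigma|$). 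Then split $l = l_0 + l_1$ with $l_0 = \frac{1-\delta}{2}$, $l_1 = \frac{1+\delta}{2}$ exactly as before, apply Cauchy–Schwarz in $(\sigma, k_1)$, and reduce to two integrals: one handled by the $k^*$-averaging estimate \eqref{precollisional on k^*} (and the Bobylev change of variables of Proposition \ref{chgvarkin}), the other by the plain $k_1$-integration estimate \eqref{estimate on integral of v1}; the resulting angular integrals converge because the exponents land strictly below $1$, using $\delta < 1$. This yields $\|\mathcal{R}_0[g,h]\|_{L^\infty} \lesssim \|g_{l_1}\|_{L^2}\|h_{l_0}\|_{L^2}$ (or the symmetric split), and then the embedding \eqref{embedding L2 to Lr} upgrades this to $\lesssim \|g_l\|_{L^r}\|h_l\|_{L^r}$. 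For $r = \infty$ the argument is the short one: $l = 2+\delta$, pull $\|g_l\|_{L^\infty}\|h_l\|_{L^\infty}$ out, apply \eqref{collisional averaging individual} to the $\sigma$-integral of $\l k^*\r^{-l}$, which gives a gain of $(1+E)^{-1}$, and then $\int_{\R^3} |w| (1+E)^{-1} \l k_1\r^{-l}\, dk_1 \lesssim \int \l k_1\r^{-1-l}\, dk_1 < \infty$ since $1+l = 3+\delta > 3$.

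The main obstacle I anticipate is bookkeeping rather than conceptual: because $\mathcal{R}_0$ couples a reflected variable $k^*$ with an unreflected variable $k_1$, one must choose the weight split and the auxiliary exponent $\alpha$ so that \emph{both} the Bobylev-type averaging estimate \eqref{precollisional on k^*} (which needs $\alpha < 1$) and the convergence of the $(1-|\widehat{K}\cdot\sigma|)$-integral via \eqref{estimate on integral of v1} (which needs exponent $< 1$) hold simultaneously — together with the use of the convexity inequality \eqref{standart singularity ineq} to separate the two singularities after the change of variables. One also has to be slightly careful about which of $|k^*|^2 \ge E/2$ or $|k_1^*|^2 \ge E/2$ holds; but since $k_1^*$ does not appear in $\mathcal{R}_0$, only \eqref{k^* lower bound by E} and \eqref{lower bound on v* by v1*} (or \eqref{lower bound on v1* by v*}) are needed to pin down $\l k^*\r$ from below in terms of $E$. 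Provided the exponents are tuned as in the $\mathcal{Q}_0$ computation — which, reassuringly, already contains this exact pairing of estimates — the bound goes through with room to spare, and no new analytical input beyond the toolbox of Section \ref{sec:tools} is required.
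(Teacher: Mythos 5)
Your reduction to $\|\mathcal{R}_0[g,h]\|_{L^\infty}\lesssim\|g_l\|_{L^r}\|h_l\|_{L^r}$, your $r=\infty$ argument, and your treatment of the region where $|k_1|\gtrsim|k|$ (there $|w|\lesssim|k_1|$, and the pairing of \eqref{estimate on integral of v1} with \eqref{precollisional on k^*} after Cauchy--Schwarz works exactly as you say; this is the paper's $\mathcal{R}_0^1$) are all fine. The gap is the complementary region $|k_1|\ll|k|$. There $E\approx|k|^2$ and $|w|\approx|k|$, while the only weights available to you are $\l k_1\r^{-s_1}$ and $\l k^*\r^{-s_2}$ with $s_1,s_2$ constrained by the embedding \eqref{embedding L2 to Lr}: after Cauchy--Schwarz in $(k_1,\sigma)$ each factor lands in a weighted $L^2$, and $\|\psi_s\|_{L^2}\lesssim\|\psi_l\|_{L^r}$ forces $s<\tfrac12+\delta$, so in particular $s_2<1$. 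The bound \eqref{k^* lower bound by E} then only yields $\l k^*\r^{-s_2}\lesssim E^{-s_2/2}(1-\lambda_E|\widehat{K}\cdot\sigma|)^{-s_2/2}$, leaving an unabsorbed factor $E^{(1-s_2)/2}$ which $\l k_1\r^{-s_1}$ cannot control, since $k_1$ may be $O(1)$ while $|k|\to\infty$ (and $k^*$ itself can be small in this region, so no indicator-type trick saves you). Your claim that ``only \eqref{k^* lower bound by E} and \eqref{lower bound on v* by v1*} (or \eqref{lower bound on v1* by v*}) are needed'' is exactly where the argument breaks: \eqref{lower bound on v* by v1*}--\eqref{lower bound on v1* by v*} compare $k^*$ with $k_1^*$, which is absent from $\mathcal{R}_0$, and the analogy with $\mathcal{Q}_0$ is misleading because there both arguments are starred, so the resonance guarantees one of them has size $\gtrsim E^{1/2}$ and the full unit of weight can be loaded onto it; for the pair $(k_1,k^*)$ no such guarantee exists.

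The missing ingredient is the geometric bound \eqref{lower bound on v1}, which is what the paper's proof uses after splitting $\mathcal{R}_0=\mathcal{R}_0^0+\mathcal{R}_0^1$ according to $|k_1|<|k|$ versus $|k_1|\geq|k|$. In the small-$k_1$ piece one bounds $|w|\leq 2|k|$ and splits $|k|=|k|^{1/2}\cdot|k|^{1/2}$: one half is absorbed by $\l k^*\r^{-1/2}$ through \eqref{k^* lower bound by E} at the cost of $(1-|\widehat{K}\cdot\sigma|)^{-1/4}$, and the other half is transferred onto $\l k_1\r^{-1/2}$ through \eqref{lower bound on v1}, at the cost of a second, different angular singularity $\big(1-\lambda_k|\widehat{(k-2R_\sigma^{\epsilon}(w))}\cdot\sigma|\big)^{-1/4}$. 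After Cauchy--Schwarz this second singularity sits on the $h_{1/2}(k^*)$ factor and is not covered verbatim by \eqref{precollisional on k^*}; it requires a dedicated computation with the Bobylev substitution $\nu=R_\sigma^{+}(w)$ of Proposition \ref{chgvarkin}, after which the angular integral $\int_0^1(1-x)^{-1/2}dx$ converges. So your plan needs to be amended by this case split and by the use of \eqref{lower bound on v1}; as written, the pointwise absorption of $|w|$ you propose fails whenever $|k_1|\ll|k|$. (A minor additional point: what gets split as $l_0+l_1=1$, or $\tfrac12+\tfrac12$, is the single power of the cross-section, not the weight $l$ itself.)
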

\begin{proof}
We treat the cases $r<\infty=$ and $r=\infty$ separately.

{\bf Case $r<\infty$.}       We clearly have $\|\l k\r^l \mathcal{L}_0[f,g,h]\|_{L^r}\leq \|f_l\|_{L^r}\|\mathcal{R}_0[g,h]\|_{L^\infty}$. So it suffices to show that
\begin{equation}\label{sufficient condition L0}
\|\mathcal{R}_0[g,h]\|_{L^\infty}\lesssim \|g_l\|_{L^r}\|h_l\|_{L^r}.    
\end{equation}
We decompose
\begin{equation}
\mathcal{R}_0[g,h]= \mathcal{R}_0^0[g,h] +\mathcal{R}_0^1[g,h],    
\end{equation}
where
\begin{align*}
 \mathcal{R}_0^0[g,h]&=\frac{1}{4}\int_{\R^3\times\S^2}|w||g(k_1) h(k^*)|\mathds{1}_{|k_1|<|k|}\chi(\widehat{w}\cdot\sigma)\,d\sigma\,dk_1,\\
\mathcal{R}_0^1[g,h]&=\frac{1}{4}\int_{\R^3\times\S^2}|w||g(k_1)h(k^*)|\mathds{1}_{|k_1|\geq |k|}\chi(\widehat{w}\cdot\sigma)\,d\sigma\,dk_1.
\end{align*}

\subsubsection*{Estimate for $\mathcal{R}_0^1$} Fix $k\in\R^3$. We note that in the corresponding domain of integration, the triangle inequality implies $|w|\leq |k|+|k_1|\leq 2|k_1|$. Then,   estimate \eqref{k^* lower bound by E} and the Cauchy-Schwarz inequality imply
\begin{align*}
\mathcal{R}_0^1[g,h](k)&\lesssim \int_{\R^3\times\S^2}|k_1||g(k_1)||h(k^*)|\chi(\widehat{w}\cdot\sigma)\,d\sigma\,dk_1\\
&\lesssim \int_{\R^3\times\S^2}\frac{|g_{1/2}(k_1)|}{(1-|\widehat{K}\cdot\sigma|)^{1/4}} |h_{1/2}(k_1^*)|\chi(\widehat{w}\cdot\sigma))\,d\sigma\,dk_1\\
&\leq I_1^{1/2}I_2^{1/2},
\end{align*}
where 
\begin{align*}
I_1&=\int_{\R^3\times\S^2}   \frac{|g_{1/2}(k_1)|^2}{(1-|\widehat{K}\cdot\sigma)^{1/2}}\,d\sigma\,dk_1,\\
I_2&=\int_{\R^3\times\S^2} |h_{l_0}(k_1^*)|^2\chi(\widehat{w}\cdot\sigma)\,d\sigma\,dk_1.
\end{align*}

Now, estimate \eqref{estimate on integral of v1} implies $I_1\lesssim \|g_{1/2}\|_{L^2}^2$, while estimate \eqref{precollisional on k^*} implies
$I_2\lesssim \|h_{1/2}\|_{L^2}^2$.

Since $k$ is arbitrary, we conclude
\begin{equation}\label{bound on R01}
\|\mathcal{R}_0^1[g,h]\|_{L^\infty}\lesssim \|g_{1/2}\|_{L^2}\|h_{1/2}\|_{L^2}\lesssim \|g_l\|_{L^r}\|h_l\|_{L^r},    
\end{equation}
where for the last inequality we use \eqref{embedding L2 to Lr}.

\subsubsection*{Estimate for $\mathcal{R}_0^0$} Fix $k\in\R^3$. Using the triangle inequality $|w|\leq |k|+|k_1|\leq 2|k|$, estimates 
\eqref{lower bound on v1}, \eqref{k^* lower bound by E} and the Cauchy-Schwarz inequality, we obtain
\begin{align*}
 |\mathcal{R}_0^0[g,h](k)|&\lesssim \int_{\R^3\times\S^2}|k||g(k_1)||h(k^*)|\chi(\widehat{w}\cdot\sigma)\,d\sigma\,dk_1\\
 &\lesssim \int_{\R^3\times\S^2}\frac{|g_{1/2}(k_1)|}{(1-|\widehat{K}\cdot\sigma|)^{1/4}}\frac{|h_{1/2}(k^*)|}{\left(1-|\widehat{(k-2R_\sigma^-(w))}\cdot\sigma|\right)^{1/4}}\chi(\widehat{w}\cdot\sigma)\,d\sigma\,dk_1\\
 &\leq   J_1^{1/2}J_2^{1/2},
\end{align*}
where 
\begin{align*}
J_1&=\int_{\R^3\times\S^2}\frac{|g_{1/2}(k_1)|^2}{(1-|\widehat{K}\cdot\sigma|)^{1/2}}\,d\sigma\,dk_1,\\
J_2&=\int_{\R^3\times\S^2}\frac{|h_{1/2}(k^*)|^2\chi(\widehat{w}\cdot\sigma))}{\left(1-|\widehat{(k-2R_\sigma^+(w))}\cdot\sigma|\right)^{1/2}}\,d\sigma\,dk_1.
\end{align*}
By \eqref{estimate on integral of v1}, we have $J_1\lesssim \|g_{1/2}\|_{L^2}^2$.
For $J_2$, we use \eqref{R+ R- v}, \eqref{angle} and the substitution $y:=w=k-k_1$ to write
$$J_2=\int_{\R^3\times\S^2}\frac{|h_{1/2}(k-R_\sigma^+(y))|^2\chi(2|\widehat{R}_\sigma^+(y)\cdot\sigma|^2-1)}{\left(1-|\widehat{(k-2R_\sigma^+(y))}\cdot\sigma|\right)^{1/2}}\,d\sigma\,dy.$$

Using Proposition \ref{chgvarkin} to substitute $\nu:=R_\sigma^+(y)$, and the basic inequality $\frac{\chi(2z^2-1)}{z^2}\lesssim 1$, $z\neq 0$, we obtain
\begin{align*}
J_2&=\int_{\R^3}|h_{1/2}(k-\nu)|^2\int_{\S^2}\frac{\chi(|2\widehat{\nu}\cdot\sigma|^2-1)}{|\widehat{\nu}\cdot\sigma|^2\left(1-|\widehat{(k-\nu)}\cdot\sigma|)^{1/2}\right)}\,d\sigma\,d\nu  \\
&\approx \int_{\R^3}|h_{1/2}(k-\nu)|^2\int_{\S^2}\frac{1}{\left(1-|\widehat{(k-\nu)}\cdot\sigma|\right)^{1/2}}\,d\sigma\,d\nu \\
&\approx \int_{\R^3}|h_{1/2}(k-\nu)|^2\int_0^1\frac{1}{(1-x)^{1/2}}\,dx\,d\nu\\
&\approx \|h_{1/2}\|_{L^2}^2.
\end{align*}

Since $k$ is arbitrary, we conclude
\begin{equation}\label{bound on R00}
\|\mathcal{R}_0^0[g,h]\|_{L^\infty}\lesssim \|g_{1/2}\|_{L^2}\|h_{1/2}\|_{L^2}\lesssim \|g_l\|_{L^r}\|h_l\|_{L^r},    
\end{equation}
where for the last inequality we use \eqref{embedding L2 to Lr}.

Combining \eqref{bound on R01}, \eqref{bound on R00}, we obtain \eqref{sufficient condition L0} and \eqref{estimate for L_0} follows.

{\bf Case $r=\infty$.} Fix $k\in\R^3$.   Using  \eqref{collisional averaging individual} and the inequality $|w|\lesssim E^{1/2}$, we bound
\begin{align*}
|\l k\r^l f(k)\,\mathcal{R}_0[g,h](k)|&\leq \|f_l\|_{L^\infty}\|g_{l}\|_{L^\infty}\|h_{l}\|_{L^\infty}\int_{\R^3}\frac{|w|}{\l k_1\r^{l}}\int_{\S^2}\frac{1}{\l k^*\r^{l}}\,d\sigma\,dk_1\\
&\lesssim \|f_l\|_{L^\infty}\|g_{l}\|_{L^\infty}\|h_{l}\|_{L^\infty}\int_{\R^3}\frac{|w|}{\l k_1\r^{l}(1+E)}\,dk_1\\
&\lesssim \|f_l\|_{L^\infty}\|g_{l}\|_{L^\infty}\|h_{l}\|_{L^\infty}\int_{\R^3}\frac{1}{\l k_1\r^{3+\delta}}\,dk_1\\
&\lesssim \|f_l\|_{L^\infty}\|g_{l}\|_{L^\infty}\|h_{l}\|_{L^\infty}.
\end{align*}
Since $k$ is arbitrary, estimate \eqref{estimate for L_0} follows.

\end{proof}

\subsection{Estimate for $\mathcal{L}_1$} We  prove the following estimate for $\mathcal{L}_1$:
\begin{proposition}\label{Prop L1}
Let $r\geq 2$ and $l=2-\frac{3}{r}+\delta$, where $0<\delta<1$. Then, the following estimate holds
\begin{align}
\|\l k\r^l  \mathcal{L}_1[f,g,h]\|_{L^r}&\lesssim \|\l k\r^{l} f\|_{L^r}\|\l k\r^l g\|_{L^r}\|\l k\r^l h\|_{L^r}\label{estimate for L_1}.
\end{align}    
\end{proposition}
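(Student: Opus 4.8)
The plan is to follow the structure of the proof of Proposition \ref{Prop L0}. Since $\mathcal{L}_1[f,g,h]=f\cdot\mathcal{R}_1[g,h]$ by \eqref{defL1}, H\"older's inequality gives $\|\l k\r^l\mathcal{L}_1[f,g,h]\|_{L^r}\le \|f_l\|_{L^r}\|\mathcal{R}_1[g,h]\|_{L^\infty}$, so it suffices to prove $\|\mathcal{R}_1[g,h]\|_{L^\infty}\lesssim \|g_l\|_{L^r}\|h_l\|_{L^r}$. For $r=\infty$ (so $l=2+\delta>2$) this is immediate: writing $g(k_1)=\l k_1\r^{-l}g_l(k_1)$, $h(k_1^*)=\l k_1^*\r^{-l}h_l(k_1^*)$ in \eqref{defR1}, integrating first in $\sigma$ via the averaging estimate \eqref{collisional averaging individual}, and then using $|w|\lesssim(1+E)^{1/2}\le\l k_1\r^{-1}(1+E)$ exactly as in the $r=\infty$ parts of Propositions \ref{Prop G0} and \ref{Prop L0}, one is reduced to $\int_{\R^3}\l k_1\r^{-3-\delta}\,dk_1<\infty$. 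The rest of the plan concerns $2\le r<\infty$.

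For $2\le r<\infty$ I decompose $\mathcal{R}_1=\mathcal{R}_1^0+\mathcal{R}_1^1$, with $\mathcal{R}_1^1$ carrying $\mathds{1}_{|k_1|\ge|k|}$ and $\mathcal{R}_1^0$ carrying $\mathds{1}_{|k_1|<|k|}$. The recurring device is to bound $|w|$ by twice the larger of $|k|,|k_1|$, split that large moment as a product of two fractional powers distributed between $g$ and $h$, and then run a Cauchy--Schwarz in $(k_1,\sigma)$ in which an extra factor $|\widehat R_\sigma^-(w)\cdot\sigma|^{\alpha}$ is peeled off and attached to $|h(k_1^*)|$. The crucial point, and the main obstacle, is that $h(k_1^*)$ lives on the angular region which is \emph{bad} for the $R_\sigma^-$ change of variables of Proposition \ref{chgvarkin} (its Jacobian $(\widehat\nu\cdot\sigma)^{-2}$ is not compensated by $\chi(\widehat w\cdot\sigma)$), in contrast to $h(k^*)$, which lives on the region good for $R_\sigma^+$. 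Consequently $\int|h(k_1^*)|^2\chi(\widehat w\cdot\sigma)\,d\sigma\,dk_1$ cannot be bounded directly and one must instead invoke \eqref{precollisional on k_1^*}, which needs exactly the weight $|\widehat R_\sigma^-(w)\cdot\sigma|^{2\alpha}$ with $\alpha>1/2$. Since $|\widehat R_\sigma^-(w)\cdot\sigma|^2=\tfrac12(1-\widehat w\cdot\sigma)$ by \eqref{R+ R- v}--\eqref{angle}, the reciprocal weight left behind on the $g$-side is $\approx(1-\widehat w\cdot\sigma)^{-\alpha}$, made integrable in $\sigma$ by $\chi(\widehat w\cdot\sigma)$ for $\alpha<1$.

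In $\mathcal{R}_1^1$ I use $|w|\le2|k_1|$, write $|k_1|=|k_1|^{\beta}|k_1|^{1-\beta}$, put $|k_1|^{\beta}$ with $g$ (producing $|g_\beta(k_1)|$, with no angular loss) and $|k_1|^{1-\beta}$ with $h$, offsetting the latter by \eqref{k^* lower bound by E} (which yields $\l k_1^*\r\gtrsim|k_1|(1-|\widehat K\cdot\sigma|)^{1/2}$), so that $\mathcal{R}_1^1[g,h](k)\lesssim\int(1-|\widehat K\cdot\sigma|)^{-(1-\beta)/2}|g_\beta(k_1)||h_{1-\beta}(k_1^*)|\chi\,d\sigma\,dk_1$. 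After Cauchy--Schwarz and peeling $|\widehat R_\sigma^-(w)\cdot\sigma|^{\alpha}$ onto $h$, the $h$-integral is $\lesssim\|h_{1-\beta}\|_{L^2}^2$ by \eqref{precollisional on k_1^*} (requiring $\alpha>1/2$), while the $g$-integral $\int(1-|\widehat K\cdot\sigma|)^{-(1-\beta)}(1-\widehat w\cdot\sigma)^{-\alpha}|g_\beta(k_1)|^2\chi\,d\sigma\,dk_1$ is $\lesssim\|g_\beta\|_{L^2}^2$ by the convexity inequality \eqref{standart singularity ineq} together with \eqref{estimate on integral of v1}, which forces $(1-\beta)+\alpha<1$, i.e. $\alpha<\beta$. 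Hence one needs $1/2<\alpha<\beta\le l_1:=\tfrac{1+\delta}{2}$, which is possible precisely because $\delta>0$; then $\mathcal{R}_1^1[g,h](k)\lesssim\|g_\beta\|_{L^2}\|h_{1-\beta}\|_{L^2}\lesssim\|g_l\|_{L^r}\|h_l\|_{L^r}$ by \eqref{embedding L2 to Lr}.

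In $\mathcal{R}_1^0$ one has $|w|\le2|k|$, and I split on whether $|\widehat R_\sigma^-(w)\cdot\sigma|>1/4$ or $\le1/4$. On the first (non-singular) region the change of variables $\nu=R_\sigma^-(w)$ of Proposition \ref{chgvarkin} has bounded Jacobian, so no angular weight on $h$ is needed and one argues verbatim as for $\mathcal{R}_0^0$ in Proposition \ref{Prop L0}: distribute $|k|=|k|^{1/2}|k|^{1/2}$, offsetting the $g$-factor by \eqref{lower bound on v1} and the $h$-factor by \eqref{k^* lower bound by E}, then Cauchy--Schwarz, \eqref{estimate on integral of v1}, and $\nu=R_\sigma^-(w)$. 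On the second (singular) region, $|w|\le2|k|$ and $|\widehat R_\sigma^-(w)\cdot\sigma|\le1/4$ force $|k_1^*|=|k-R_\sigma^-(w)|\ge|k|/2$, hence $\l k_1^*\r\gtrsim\l k\r$; so I place $|k|^{1-\gamma}$ on $h$ directly (using $|k|\lesssim\l k_1^*\r$, no angular loss) and $|k|^{\gamma}$ on $g$ via \eqref{lower bound on v1}, and run the same Cauchy--Schwarz with the $|\widehat R_\sigma^-(w)\cdot\sigma|^{\alpha}$-peeling; now $g$-side integrability forces $\gamma+\alpha<1$, so one takes $l_0:=\tfrac{1-\delta}{2}\le\gamma<1/2$ and $1/2<\alpha<1-\gamma$ (again possible since $\delta>0$) and obtains $\lesssim\|g_\gamma\|_{L^2}\|h_{1-\gamma}\|_{L^2}\lesssim\|g_l\|_{L^r}\|h_l\|_{L^r}$ by \eqref{embedding L2 to Lr}. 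Collecting the bounds for $\mathcal{R}_1^0$ and $\mathcal{R}_1^1$ yields $\|\mathcal{R}_1[g,h]\|_{L^\infty}\lesssim\|g_l\|_{L^r}\|h_l\|_{L^r}$ and hence \eqref{estimate for L_1}. As noted, the hardest point is the mismatch between the angular support of $h(k_1^*)$ and the region regular for the Bobylev change of variables, which is what necessitates \eqref{precollisional on k_1^*} and the careful budgeting of angular singularities and moment splits described above; it is exactly this budgeting that consumes the slack provided by $\delta>0$.
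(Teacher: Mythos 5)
Most of your plan coincides with the paper's argument: the reduction to $\|\mathcal{R}_1[g,h]\|_{L^\infty}\lesssim\|g_l\|_{L^r}\|h_l\|_{L^r}$, the $r=\infty$ case via \eqref{collisional averaging individual}, and your treatment of $\mathcal{R}_1^1$ (moment split $\beta=l_1$, peeling $|\widehat{R}_\sigma^-(w)\cdot\sigma|^{\alpha}$ with $\alpha=1/2+\delta/4$, \eqref{precollisional on k_1^*} on the $h$-side, convexity plus \eqref{estimate on integral of v1} on the $g$-side) is essentially the paper's estimate for its own $\mathcal{R}_1^1$; your sub-region $|\widehat{R}_\sigma^-(w)\cdot\sigma|>1/4$, where the Jacobian of $\nu=R_\sigma^-$ is bounded, is also fine.

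The gap is in the singular sub-region of your $\mathcal{R}_1^0$ ($|k_1|<|k|$, $|\widehat{R}_\sigma^-(w)\cdot\sigma|\le 1/4$). There you transfer $|k|^{\gamma}$ onto $g$ via \eqref{lower bound on v1}, which produces the loss $\bigl(1-|\widehat{(k-2R_\sigma^-(w))}\cdot\sigma|\bigr)^{-\gamma/2}$, and after Cauchy--Schwarz you keep this factor (together with $(1-\widehat w\cdot\sigma)^{-\alpha}$) on the $g$-side and claim it integrates in $\sigma$ by \eqref{standart singularity ineq} and \eqref{estimate on integral of v1}, under $\gamma+\alpha<1$. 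But \eqref{estimate on integral of v1} only handles singularities in a direction that is fixed once $k,k_1$ are fixed, whereas here the direction is $\sigma$-dependent: $k-2R_\sigma^-(w)=k_1+|w|\sigma$. A direct computation gives
\begin{align*}
1-\bigl(\widehat{(k_1+|w|\sigma)}\cdot\sigma\bigr)^2=\frac{|k_1|^2\bigl(1-(\widehat{k_1}\cdot\sigma)^2\bigr)}{|k_1+|w|\sigma|^2},
\end{align*}
so when $|k_1|\ll|w|\approx|k|$ (which your region allows) one has $1-|\widehat{(k-2R_\sigma^-(w))}\cdot\sigma|\lesssim |k_1|^2/|w|^2$ for \emph{every} $\sigma$; hence the inner $\sigma$-integral over the cap $\widehat w\cdot\sigma\ge 7/8$ is $\gtrsim(\l k\r/\l k_1\r)^{2(\alpha+\gamma)}$, not $O(1)$, and the bound $\lesssim\|g_\gamma\|_{L^2}^2$ fails (test with $g$ concentrated near $k_1=0$ and $|k|\to\infty$). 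This is precisely why the paper pairs that singular factor with $h(k_1^*)$, i.e.\ with the factor that undergoes the substitution $\nu=R_\sigma^-(y)$: after the change of variables the direction becomes $k-2\nu$, fixed for fixed $\nu$, and the leftover Jacobian power is absorbed by choosing the peeled exponent large, $\alpha=1-\delta/4$, so that $2(1-\alpha)+l_0=1/2<1$; the $g$-side then carries only $(1-\widehat w\cdot\sigma)^{-\alpha}$ with $\alpha<1$. Note that your stated parameter range $1/2<\alpha<1-\gamma$ is incompatible with this repair (it would require $\alpha>(1+\gamma)/2$), so the bookkeeping in this sub-region must be redone along the paper's lines; also the paper's split is $|k_1|\lessgtr|w|/2$ rather than $|k_1|\lessgtr|k|$, which lets it handle the whole small-$|k_1|$ piece at once without your cap/no-cap dichotomy.
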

\begin{proof}
We treat the cases $r<\infty=$ and $r=\infty$ separately again.

{\bf Case $ r<\infty$.}   We clearly have $\|\l k\r^l\mathcal{L}_1[f,g,h]\|_{L^r}\leq \|f_l\|_{L^r}\|\mathcal{R}_1[g,h]\|_{L^\infty}$. So it suffices to show that
\begin{equation}\label{sufficient condition L1}
\|\mathcal{R}_1[g,h]\|_{L^\infty}\lesssim \|g_l\|_{L^r}\|h_l\|_{L^r}.    
\end{equation}
 We decompose
\begin{equation}
\mathcal{R}_1[g,h]= \mathcal{R}_1^0[g,h] +\mathcal{R}_1^1[g,h],    
\end{equation}
where
\begin{align*}
 \mathcal{R}_1^0[g,h]&=\frac{1}{4}\int_{\R^3\times\S^2}|w||g(k_1) h(k_1^*)|\mathds{1}_{|k_1|<|w|/2}\chi(\widehat{w}\cdot\sigma)\,d\sigma\,dk_1,\\
\mathcal{R}_1^1[g,h]&=\frac{1}{4}\int_{\R^3\times\S^2}|w||g(k_1)h(k_1^*)|\mathds{1}_{|k_1|\geq |w|/2}\chi(\widehat{w}\cdot\sigma)\,d\sigma\,dk_1.
\end{align*}

Let $l_0=\frac{1-\delta}{2}$, $l_1=\frac{1+\delta}{2}$.

\subsubsection*{Estimate for $\mathcal{R}_1^1$} Let $\alpha=1/2+\delta/4,$ and fix $k\in\R^3$. We use estimate \eqref{k^* lower bound by E} and the Cauchy-Schwarz inequality to write
\begin{align*}
\mathcal{R}_1^1[g,h](k)&\lesssim \int_{\R^3\times\S^2}|k_1||g(k_1)||h(k_1^*)|\chi(\widehat{w}\cdot\sigma)\,d\sigma\,dk_1\\
&= \int_{\R^3\times\S^2} |g_{l_1}(k_1)|\frac{|k_1|^{l_0}|h_{l_0}(k_1^*)|}{\l k_1^*\r^{l_0}}\chi(\widehat{w}\cdot\sigma)\,d\sigma\,dk_1\\
&\lesssim \int_{\R^3\times\S^2}\frac{|g_{l_1}(k_1)|\chi(\widehat{w}\cdot\sigma)}{|\widehat{R}_\sigma^-(w)\cdot\sigma|^\alpha(1-|\widehat{K}\cdot\sigma|)^{l_0/2}} \left(|\widehat{R}_\sigma^-(w)\cdot\sigma|^\alpha|h_{l_0}(k_1^*)|\right)\,d\sigma\,dk_1\\
&\leq I_1^{1/2}I_2^{1/2},
\end{align*}
where 
\begin{align*}
I_1&=\int_{\R^3\times\S^2}   \frac{|g_{l_1}(k_1)|^2\chi(\widehat{w}\cdot\sigma)}{|\widehat{R}_\sigma^-(w)\cdot\sigma|^{2\alpha}(1-|\widehat{K}\cdot\sigma|)^{l_0}}\,d\sigma\,dk_1,\\
I_2&=\int_{\R^3\times\S^2} |\widehat{R}_\sigma^-(w)\cdot\sigma|^{2\alpha}|h_{l_0}(k_1^*)|^2\,d\sigma\,dk_1.
\end{align*}

Since $\alpha>1/2$, estimate \eqref{precollisional on k_1^*} implies
$I_2\lesssim \|h_{l_0}\|_{L^2}^2$

To estimate $I_1$, we use \eqref{angle} followed by \eqref{standart singularity ineq} to write
\begin{align*}
I_1&=\int_{\R^3}|g_{l_1}(k_1)|^2\int_{\S^2}\frac{\chi(\widehat{w}\cdot\sigma)}{(1-\widehat{w}\cdot\sigma)^\alpha (1-|\widehat{K}\cdot\sigma|)^{l_0}}\,d\sigma\,dk_1\\ 
&\leq\int_{\R^3}|g_{l_1}(k_1)|^2\int_{\S^2}\frac{\chi(\widehat{w}\cdot\sigma)}{(1-\widehat{w}\cdot\sigma)^{\alpha+l_0}}+\frac{1}{(1-|\widehat{K}\cdot\sigma|)^{\alpha+l_0}}\,d\sigma\,dk_1.
\end{align*}
Integrating in spherical coordinates, we obtain
\begin{align*}
\int_{\S^2} \frac{\chi(\widehat{w}\cdot\sigma)}{(1-\widehat{w}\cdot\sigma)^{\alpha+l_0}}\,d\sigma&\approx\int_{0}^1\frac{1}{(1-x)^{\alpha+l_0}}\,dx\lesssim 1,\\   
\int_{\S^2}\frac{1}{(1-|\widehat{K}\cdot\sigma|)^{\alpha+l_0}}\,d\sigma &\approx \int_0^1 \frac{1}{(1-x)^{\alpha+l_0}}\,dx\lesssim 1,
\end{align*}
where we used the fact that $\alpha+l_0=1-\delta/4$ for the convergence of the integrals in $x$. It follows that $I_1\lesssim \|g_{l_1}\|_{L^2}^2$.

Since $k$ is arbitrary, we conclude
\begin{equation}\label{bound on R11}
\|\mathcal{R}_1^1[g,h]\|_{L^\infty}\lesssim \|g_{l_1}\|_{L^2}\|h_{l_0}\|_{L^2}\lesssim \|g_l\|_{L^r}\|h_l\|_{L^r},    
\end{equation}
where for the last inequality we use \eqref{embedding L2 to Lr}.

\subsubsection*{Estimate for $\mathcal{R}_1^0$} 
    Define $\alpha=1-\delta/4$ and fix $k\in\R^3$. Since in the corresponding domain of integration $\widehat{w}\cdot\sigma>0$ and $|k_1|<|w|/2$, \eqref{R+ R- v}, the triangle inequality, and \eqref{angle} imply 
\begin{align*}
|k_1^*|&=|k_1-R_\sigma^+(w)|\geq |R_\sigma^+(w)|-|k_1|\geq |w||\widehat{R}_\sigma^+(w)\cdot\sigma|-\frac{|w|}{2}\notag\\
&=|w|\sqrt{\frac{1+\widehat{w}\cdot\sigma}{2}}-\frac{|w|}{2}>\frac{\sqrt{2}-1}{2}|w|.
\end{align*}
In addition to that, by the triangle inequality we have $|k|=|u+k_1|\geq |w|-|k_1|\geq |w|/2$. Using \eqref{lower bound on v1} for $\epsilon=-$ as well, we obtain
$$\frac{|w|}{\l k_1\r^{l_0}\l k_1^*\r^{l_1}}\mathds{1}_{|k_1|<|w|/2}\chi(\widehat{w}\cdot\sigma)\lesssim \frac{1}{\left(1-|\widehat{(k-2R_\sigma^-(w))}\cdot\sigma|\right)^{l_0/2}}.$$
Now, the above estimate followed the Cauchy-Schwarz inequality imply
\begin{align*}
|\mathcal{R}_1^0[g,h](k)|&\lesssim \int_{\R^3\times\S^2}\frac{|g_{l_0}(k_1)||h_{l_1}(k_1^*)|\chi(\widehat{w}\cdot\sigma)}{\left(1-|\widehat{(k-2R_\sigma^-(w))}\cdot\sigma|\right)^{l_0/2}}\,d\sigma\,dk_1\\
&=\int_{\R^3\times\S^2}\frac{|g_{l_0}(k_1)|\chi(\widehat{w}\cdot\sigma)}{|\widehat{R}_\sigma^-(w)\cdot\sigma|^\alpha} \frac{|\widehat{R}_\sigma^-(w)\cdot\sigma|^\alpha|h_{l_1}(k^*)|}{\left(1-|\widehat{(k-2R_\sigma^-(w))}\cdot\sigma|\right)^{l_0/2}}\,d\sigma\,dk_1\\
&\leq J_1^{1/2}J_2^{1/2},
\end{align*}
where
\begin{align*}
J_1&=\int_{\R^3\times\S^2} \frac{|g_{l_0}(k_1)|^2\chi(\widehat{w}\cdot\sigma)}{|\widehat{R}_\sigma^-(w)\cdot\sigma|^{2\alpha}} \,d\sigma\,dk_1, \\
J_2&= \int_{\R^3\times\S^2} \frac{|\widehat{R}_\sigma^-(w)\cdot\sigma|^{2\alpha}|h_{l_1}(k^*)|^2}{\left(1-|\widehat{(k-2R_\sigma^-(w))}\cdot\sigma|\right)^{l_0}}\,d\sigma\,dk_1.
\end{align*}

For $J_1$, we use \eqref{angle} to write
\begin{equation*}
J_1\approx \int_{\R^3}|g_{l_0}(k_1)|^2\int_{\S^2}\frac{\chi(\widehat{w}\cdot\sigma)}{(1-\widehat{w}\cdot\sigma)^\alpha}\,d\sigma\,dk_1\approx \int_{\R^3}|g_{l_0}(k_1)|^2\int_{0}^1\frac{1}{(1-x)^\alpha}\,dx\,dk_1 \approx \|g_{l_0}\|_{L^2}^2, 
\end{equation*}
where we used the fact that $\alpha<1$ for the convergence of the integral in $x$.

For $J_2$, we first use \eqref{R+ R- v} and the substitution $y:=w=k-k_1$ to obtain
$$J_2=\int_{\R^3\times\S^2} \frac{|\widehat{R}_\sigma^-(y)\cdot\sigma|^{2\alpha}|h_{l_1}(k-R_\sigma^-(y))|^2}{\left(1-|\widehat{(k-2R_\sigma^-(y))}\cdot\sigma|\right)^{l_0}}\,d\sigma\,dy.$$
Then we use Proposition \ref{chgvarkin} to substitute $\nu:=R_\sigma^-(y)$ and obtain
\begin{align*}
J_2&\approx \int_{\R^3}|h_{l_1}(k-\nu)|^2\int_{\S^2}\frac{1}{|\widehat{\nu}\cdot\sigma|^{2(1-\alpha)}\left(1-|\widehat{(k-\nu)}\cdot\sigma|\right)^{l_0}}\,d\sigma\,d\nu\\
&\leq \int_{\R^3}|h_{l_1}(k-\nu)|^2\int_{\S^2}\frac{1}{|\widehat{\nu}\cdot\sigma|^{2(1-\alpha)+l_0}}+\frac{1}{\left(1-|\widehat{(k-\nu)}\cdot\sigma|\right)^{2(1-\alpha)+l_0}}\,d\sigma\,d\nu,
\end{align*}
where for the last step, we used \eqref{standart singularity ineq}.
Now, integrating in spherical coordinates, we obtain
\begin{align*}
 &\int_{\S^2}\frac{1}{|\widehat{\nu}\cdot\sigma|^{2(1-\alpha)+l}}\,d\sigma\approx\int_0^1\frac{1}{x^{2(1-\alpha)+l_0}}\,dx\lesssim 1,\\
 &\int_{\S^2}\frac{1}{\left(1-|\widehat{(k-\nu)}\cdot\sigma|\right)^{2(1-\alpha)+l_0}}\,d\sigma\approx \int_0^1\frac{1}{(1-x)^{2(1-\alpha)+l_0}}\,dx\lesssim 1,
\end{align*}
where we used the fact that $2(1-\alpha)+l_0=1/2$ for the convergence of the integral in $x$. It follows that $J_2\lesssim \|h_{l_1}\|_{L^2}^2$.

Since $k$ is arbitrary, we conclude
\begin{equation}\label{bound on R10}
\|\mathcal{R}_1^0[g,h]\|_{L^\infty}\lesssim \|g_{l_0}\|_{L^2}\|h_{l_1}\|_{L^2}\lesssim \|g_l\|_{L^r}\|h_l\|_{L^r},    
\end{equation}
where for the last inequality we use \eqref{embedding L2 to Lr}.

Combining \eqref{bound on R11}, \eqref{bound on R10}, we obtain \eqref{sufficient condition L1}, and \eqref{estimate for L_1} follows. 

{\bf Case $r=\infty$.} The proof is identical to the corresponding case  of Proposition \ref{Prop L0}.
\end{proof}

\section{Proof of the main result} \label{sec:pfmain}
In this section, we use the estimates from the previous Sections \ref{sec:gain} and \ref{sec:loss} to prove our main result Theorem \ref{main-thm}. We fix $2 \leq r \leq \infty$  and denote $l = 2 - \frac{3}{r} + \delta$, where $0 < \delta < 1/r$  if $r<\infty$, and $\delta>0$ if $r=\infty$.

We first note the following auxiliary estimate: for $\phi,\psi\in \l v\r^{-l}L^r$ and $\mathcal{T}\in\{\mathcal{G}_0,\mathcal{G}_1,\mathcal{L}_0,\mathcal{L}_1\}$, trilinearity, the triangle inequality and \eqref{estimate for G0}, \eqref{estimate on G1}, \eqref{estimate for L_0}, \eqref{estimate for L_1} imply that for some numerical constant $C$ we have

\begin{equation}\label{aux bound}
\begin{aligned}
\|&\l k\r^l\left(\mathcal{T}[\phi,\phi,\phi]-\mathcal{T}[\psi,\psi,\psi]\right)\|_{L^r}\\
&\leq \|\l k\r^l\mathcal{T}[\phi-\psi,\phi,\phi]\|_{L^r}+\|\l k\r^l\mathcal{T}[\psi,\phi-\psi,\phi]\|_{L^r} +  \|\l k\r^l\mathcal{T}[\psi,\psi,\phi-\psi]\|_{L^r}\\
&\leq C\left(\|\l k\r^l\phi\|_{L^r}^2+\|\l k\r^l\phi\|_{L^r}\|\l k\r^l\psi\|_{L^r}+\|\l k\r^l\psi\|_{L^r}^2\right)\|\l k\r^l(\phi-\psi)\|_{L^r}.
\end{aligned}
\end{equation}
In particular, for $\psi=0$ we have
\begin{equation}\label{aux-zero}
    \|\l k\r^l\mathcal{T}[\phi,\phi,\phi]\|_{L^r}\leq C\|\l k\r^l\phi\|_{L^r}^3.
\end{equation}

\subsection{Proof of local well-posedness} \label{subsec:LWP}

Fix $f_0 \in \l k \r^{-l} L^r$, and let $R := \Vert \l k \r^{l} f_0 \Vert_{L^{r}},$ $T:= \frac{1}{96 C R^2}$ where $C$ was defined in \eqref{aux bound}. Define
\begin{align*}
\textbf{B}(2R) := \bigg \lbrace f \in \mathcal{C} \big( [0,T]; \l k \r^{-l} L^{r} \big) \,:\, \sup_{t \in [0,T]} \Vert \l k \r^{l} f(t) \Vert_{L^{r}} \leq 2R \bigg \rbrace.
\end{align*}

Consider the map
\begin{align*} \Phi_{f_0}: 
    \begin{cases}
    \textbf{B}(2R) & \longrightarrow \, \mathcal{C}\big([0,T];\l k\r^{-l}L^r\big) \\
    f & \longmapsto \,  f_0 + \displaystyle \int_0^t \mathcal{C}[f](s) \, ds
    \end{cases} 
\end{align*}
We prove that $\Phi_{f_0}$ is a contraction on $\textbf{B}(2R).$ 
\\

\noindent
\textit{Stability.} Let $f \in \textbf{B}(2R).$ By the triangle inequality and \eqref{aux-zero}, we obtain
\begin{align*}
    \Vert \l k \r^{l} \Phi_{f_0} (f)(t) \Vert_{L^r} &\leq \|\l k\r^l f_0\|_{L^r}+\sum_{\mathcal{T}\in\{\mathcal{G}_0,\mathcal{G}_1,\mathcal{L}_0,\mathcal{L}_1\}}\int_0^t\|\l k\r^l\mathcal{T}[f,f,f](s)\|_{L^r}\,ds\\
    &\leq  
    R + 32 C T R^3 < 2R.
\end{align*}
Therefore $\Phi_{f_0}:\bm{B}(2R)\to\bm{B}(2R)$.
\\

\noindent
\textit{Contraction.} 
Let $f,g\in\bm{B}(2R)$. Then, by the triangle inequality and \eqref{aux bound}, we obtain

\begin{align*}
    \big \Vert \l k \r^{l} \big( \Phi_{f_0} (f) - \Phi_{f_0}(g) \big) \big \Vert_{L^r}&\leq \sum_{\mathcal{T}\in\{\mathcal{G}_0,\mathcal{G}_1,\mathcal{L}_0,\mathcal{L}_1\}}\int_0^t\big\|\l k\r^l\big(\mathcal{T}[f,f,f](s)-\mathcal{T}[g,g,g](s)\big)\big\|_{L^r}\,ds\\
&\leq 48 C R^2 T \big \Vert \l k \r^{l} \big( f-g \big) \big \Vert_{L^r} = \frac{1}{2} \big \Vert \l k \r^{l} \big( f-g \big) \big \Vert_{L^r}.
\end{align*}
Existence and uniqueness of a solution to \eqref{KWE} in $[0,T]$ follows by the contraction mapping principle.
\\

\noindent 
\textit{Continuous dependence on the initial data.} Let $f_0, g_0 \in\l k\r^{-l}L^r$. Denote $R_1:=\|\l k\r^l f_0\|_{L^r}$, $R_2:=\|\l k\r^{l}g_0\|_{L^r}$ and let $T_1=\frac{1}{96CR_1^2}$, $T_2=\frac{1}{96CR_2^2}$ be the corresponding times of existence. Let $f\in\bm{B}(2R_1)$ and $g\in\bm{B}(2R_2)$ be the solutions corresponding to  $f_0$ and $g_0$ respectively, and denote $T_{min}:=\min\{T_1,T_2\}=\frac{1}{96C\max\{R_1^2,R_2^2\}}$.

 Then, for all $t\in[0,T]$, the definition of a solution, the triangle inequality, and \eqref{aux bound} imply
\begin{align*} 
&\big \Vert \l k \r^{l} \big( f(t) - g(t) \big) \big \Vert_{L^r} \\
&\hspace{.5cm}\leq \big \Vert \l k \r^{l} \big( f_0 - g_0 \big)\|_{L^r}+ \sum_{\mathcal{T}\in\{\mathcal{G}_0,\mathcal{G}_1,\mathcal{L}_0,\mathcal{L}_1\}}\int_0^t\big\|\l k\r^l\big(\mathcal{T}[f,f,f](s)-\mathcal{T}[g,g,g](s)\big)\big\|_{L^r}\,ds\\
&\leq \big \Vert \l k \r^{l} \big( f_0 - g_0 \big)\|_{L^r}+48C\big(\max\{R_1,R_2\}\big)^2T_{min} \sup_{t\in[0,T_{min}]}\|\l k\r^l \big(f(t)-g(t)\big)\|_{L^r}\\
&=\big \Vert \l k \r^{l} \big( f_0 - g_0 \big)\|_{L^r}+\frac{1}{2} \sup_{t\in[0,T_{min}]}\|\l k\r^l \big(f(t)-g(t)\big)\|_{L^r}.
\end{align*}
Taking supremum on the left hand side of the above estimate, \eqref{continuity wrt data estimate} follows.

\subsection{Proof of positivity}

We now prove the last part of the statement in Theorem \ref{main-thm}, namely that the flow preserves positivity. Again denote $R=\|\l k\r^l f_0\|_{L^r}$ and $T=\frac{1}{96CR^2}$.

We rely on the classical idea of the Kaniel-Shinbrot iteration \cite{KS,illner-shinbrot}, which we used in our previous work \cite{AmLe24} to prove positivity as well. We sketch the approach for the convenience of the reader.
\\

\noindent
\textit{Definition of lower and upper solutions.} Let $u_0, l_0:[0,\infty)\times\R^3\to\R$ with $0\leq l_0\leq u_0$. Recalling \eqref{defQ+} and \eqref{defR}, define the coupled initial value problems
\begin{equation}\label{lower IVP n}
\begin{cases}
\partial_t l_n+ l_n \mathcal{R}[u_{n-1}] &=\mathcal{Q}^{+}[l_{n-1}]\\
l_n(0)=f_0
\end{cases},\quad n\in\mathbb{N},
\end{equation}
and
\begin{equation}\label{upper IVP n}
\begin{cases}
\partial_t u_n+ u_n \mathcal{R}[l_{n-1}]&=\mathcal{Q}^{+}[u_{n-1}]\\
u_n(0)=f_0
\end{cases},\quad  n\in \mathbb{N}.
\end{equation}
After integration, we find the integro-differential equations
\begin{equation}
\label{solution lower IVP n}
l_n(t)= \exp \left( -\int_0^t \mathcal{R}[u_{n-1}](s)\,ds \right) f_0 +\int_0^t \mathcal{Q}^{+}[l_{n-1}](s)\exp\left(-\int_s^t \mathcal{R}[u_{n-1}](\tau)\,d\tau \right)\,ds,
\end{equation}
\begin{equation}\label{solution upper IVP n}
u_n(t)=\exp\left( -\int_0^t \mathcal{R}[l_{n-1}](s)\,ds\right) f_0 +\int_0^t \mathcal{Q}^{+}[u_{n-1}](s)\exp\left(-\int_s^t \mathcal{R}[l_{n-1}](\tau)\,d\tau \right)\,ds.
\end{equation}

\noindent
\textit{Initialization.} We define $l_0 = 0$ and $u_0 = \widetilde{f}$, where $\widetilde{f}$ is the strong solution of the gain only initial value problem (recalling the notation \eqref{defQ+})
\begin{align} \label{IVP Q+}
    \begin{cases}
    \partial_t f = \mathcal{Q}^+[f], \\
    f(0) = f_0.
    \end{cases}
\end{align}
One can show that this problem is well-posed in $[0,T]$, and that the solution $\widetilde{f}$ satisfies 
\begin{equation}\label{gain solution estimate}
\sup_{t\in[0,T]}\|\l k\r^l \widetilde{f}(t)\|_{L^r}\leq 2R.    
\end{equation}
The proof is identical to the proof in Subsection \ref{subsec:LWP} (just ignore the loss terms), so we omit the details.
 Moreover $\widetilde{f} \geq 0$ by the positivity of successive Picard iterates. 
\\

\noindent 
\textit{Monotonicity.} 

It is standard to prove that the sequences $(l_n)_{n=1}^\infty,\,(u_n)_{n=1}^\infty$ are nested, that is for every $n \in \mathbb{N},$ 
\begin{equation}\label{nesting condition}
0= l_0\leq l_1\leq\dots\leq l_{n-1}\leq l_n\leq u_n\leq u_{n-1}\leq\dots \leq u_1= u_0.
\end{equation}
Indeed, this follows directly by induction, combined with the integral representations \eqref{solution lower IVP n}-\eqref{solution upper IVP n}, as well as the monotonicity of the operators $\mathcal{R}$ and $\mathcal{Q}^{+}.$
\color{black}
\\

\noindent 
\textit{Convergence.} For fixed $t\geq 0$, \eqref{nesting condition} implies that the sequence $(l_n(t))_{n=0}^\infty$ is increasing and upper bounded, therefore it converges $l_n(t)\nearrow l(t)$. Similarly, the sequence  $(u_n(t))_{n=0}^\infty$ is decreasing and lower bounded, so $u_n(t)\searrow u(t)$. Clearly, by \eqref{nesting condition} we have
\begin{equation}\label{comparison u, l}
0\leq l(t)\leq u(t)\leq u_0.
\end{equation}

Next we integrate \eqref{lower IVP n}-\eqref{upper IVP n} in time and let $n\to\infty.$ By the dominated convergence theorem, we obtain
\begin{align}
l(t)+ \int_0^t  \big(l\mathcal{R}[u]\big)(s) \,ds&=f_0+\int_0^t \mathcal{Q}^{+}[l](s) \,ds,\label{integrated loss}\\
u(t)+ \int_0^t \big( u\mathcal{R}[l] \big)(s)\,ds &=f_0+\int_0^t \mathcal{Q}^{+}[u](s)\,ds.\label{integrated gain}
\end{align}
Subtracting \eqref{integrated gain} from \eqref{integrated loss} and using the fact that $u\geq l$, we find that $w := u-l\geq 0$ satisfies
\begin{align}
 w(t)&=\int_0^t \left( \mathcal{Q}^{+}[u]-\mathcal{Q}^{+}[l] \right)(s)\,ds-\int_{0}^t (u\mathcal{R}[l]-l\mathcal{R}[u])(s)\,ds\notag\\
 &\leq \int_0^t\left(\mathcal{Q}^{+}[u]-\mathcal{Q}^{+}[l]\right)(s)\,ds,\label{equation satisfied by w}
\end{align}
where for the last inequality, we used
\begin{align*}
u\mathcal{R}[l]-l\mathcal{R}[u]&=w\mathcal{R}[l]+l(\mathcal{R}[u]-\mathcal{R}[l]) \geq 0
\end{align*}
since used $w\geq 0.$
We now use \eqref{aux bound} followed by \eqref{comparison u, l}, the fact that $u_0=\widetilde{f}$ and \eqref{gain solution estimate}, to bound the right hand side of \eqref{equation satisfied by w} as follows:
\begin{align*}
\Vert \l k \r^{l} w \Vert_{L^r} &\leq \sum_{\mathcal{T}\in\{\mathcal{G}_0,\mathcal{G}_1\}}\int_0^t \big\|\l k\r^l\big(\mathcal{T}(u,u,u)(s)-\mathcal{T}(l,l,l)(s)\big)\big\|_{L^r}\,ds\\
&\leq 24CR^2T \|\l k\r^l w\|_{L^r} = \frac{1}{4} \Vert \l k \r^{l} w \Vert_{L^r} .
\end{align*}
We conclude that $w=0$ so $u=l$.

Defining $f:=l=u$ and using \eqref{integrated loss} or \eqref{integrated gain}, we obtain that $f$ is a solution to the initial value problem \eqref{KWE}. Moreover $f\geq 0$ since $l\geq 0$ by \eqref{comparison u, l}. By  uniqueness, the positivity of the solution follows.


\begin{thebibliography}{1}

\bibitem{AlCa10}R. Alonso, E. Carneiro, \textit{Estimates for the Boltzmann collision operator via
radial symmetry and Fourier transform}, Adv. Math. {\bf 223} (2010) 511–528


\bibitem{AlCaGa10} R. Alonso, E. Carneiro, I.M. Gamba, \textit{Convolution Inequalities for the Boltzmann Collision Operator}, Comm. Math. Phys. 298, 293–322 (2010)

\bibitem{Am}
I. Ampatzoglou, \textit{Global well-posedness of the inhomogeneous wave kinetic equation near vacuum}, Kinet. Relat. Models {\bf 17} (2024), no. 6, 838-854


\bibitem{AmLe24}
I. Ampatzoglou, T. L\'eger, \textit{Global existence of strong solutions to the Inhomogeneous Kinetic Wave equation},  Comm. Math. Phys. {\bf 407}, 144 (2026)

\bibitem{AmLeill25} I. Ampatzoglou, T. L\'eger, \textit{On the ill-posedness of wave kinetic equations}, Nonlinearity 38 115004 (2025)
\bibitem{AmLemain25}
I. Ampatzoglou, T. L\'eger, \textit{Moment-preserving Young's inequality for the hard potential Boltzmann gain operator}, arXiv:2505.09554 (2025)

\bibitem{AmMiPaTa24} I. Ampatzoglou, J.K. Miller, N. Pavlovi\'c, M. Taskovi\'c, 	 \textit{Inhomogeneous wave kinetic equation and its hierarchy in polynomially weighted  $L^\infty$ spaces}, Comm. Partial Differential Equations {\bf 50} (2025), no. 6, 723-765

\bibitem{Arsenio}
D. Arsenio, \textit{On the global existence of mild solutions to the Boltzmann equation for small data in $L^D$}, Comm. Math. Phys. \textbf{302} (2011), 453-476


\bibitem{Bo75} A.V. Bobylev, {\it Fourier transform method in the theory of the Boltzmann equation for Maxwellian molecules}, Dokl. Akad. Nauk USSR {\bf 225} (1975), 1041–
1044.
\bibitem{Bo88}A.V. Bobylev, {\it The theory of the nonlinear spatially uniform Boltzmann equation for Maxwell molecules}, Sov. Sci. Rev. C. Math. Phys. {\bf 7} (1988), 111–233

\bibitem{BGHS}
T. Buckmaster, P. Germain, Z. Hani, J. Shatah, \textit{Onset of the wave turbulence description of the longtime behavior of the nonlinear Schr\"{o}dinger equation}, Invent. Math. \textbf{225} (2021), no. 3, 787–855.


\bibitem{CG1}
C. Collot, P. Germain, \textit{On the derivation of the homogeneous wave kinetic equation}, to appear in Comm. Pure Appl. Math., arXiv:1912.10368


\bibitem{CG2}
C. Collot, P. Germain, \textit{Derivation of the homogeneous wave kinetic equation: longer time scales}, arXiv:2007.03508


\bibitem{CollotDietertGermain24}
C. Collot, H. Dietert, P. Germain, \textit{Stability and cascades for the Kolmogorov-Zakharov spectrum of wave turbulence}, Arch. Ration. Mech. Anal. 248 (2024), \textbf{no. 1}, no. 7, 31 pp


\bibitem{DH1}
Y. Deng, Z. Hani, \textit{On the derivation of the wave kinetic equation for NLS}, Forum Math. Pi \textbf{9} (2021), Paper No. e6, 37 pp.


\bibitem{DH2}
Y. Deng, Z. Hani, \textit{Full derivation of the wave kinetic equation}, Invent. Math. \textbf{233} (2023), no. 2, 543–724.

\bibitem{DH3}
Y. Deng, Z. Hani, \textit{Long time justification of wave turbulence theory}, preprint arXiv:2311.10082


\bibitem{DHM}
Y. Deng, Z. Hani, X. Ma \textit{Long time derivation of Boltzmann equation from hard sphere dynamics}, Preprint arXiv:2408.07818 (2024)


\bibitem{EscobedoMenegaki24} 
M. Escobedo, A. Menegaki, \textit{Instability of singular equilibria of a wave kinetic equation}, arXiv:2406.05280


\bibitem{EV}
M. Escobedo, J.J.L. Vel\'{a}zquez, \textit{On the theory of weak turbulence for the nonlinear Schr\"{o}dinger equation}, Mem. Amer. Math. Soc. \textbf{238} (2015), no. 1124, v+107 pp.



\bibitem{GeIoTr}
P. Germain, A. D. Ionescu, M.-B. Tran, \textit{Optimal local well-posedness theory for the wave kinetic equation}, J. Funct. Anal. \textbf{279} (2020), no. \textbf{4}, 108570, 28 pp.


\bibitem{Gus1}
T. Gustafsson, \textit{$L^p$-estimates for the nonlinear spatially homogeneous Boltzmann equation,} Arch. Rational Mech. Anal. 92, 1 (1986), 23–57


\bibitem{Gus2}
T. Gustafsson, \textit{Global $L^p$-properties for the spatially homogeneous Boltzmann equation,} Arch. Rational Mech. Anal. 103 (1988), 1–38

\bibitem{ha62} K. Hasselmann, \textit{On the non-linear energy transfer in a gravity-wave spectrum. I. General theory}, J. Fluid Mech. \textbf{12} (1962), 481–500.


\bibitem{ha63} K. Hasselmann, \textit{On the non-linear energy transfer in a gravity wave spectrum. II. Conservation theorems; wave-particle analogy; irreversibility}, J. Fluid Mech. \textbf{15} (1963), 273–281. 



\bibitem{illner-shinbrot} R. Illner, M. Shinbrot, 
\textit{The Boltzmann Equation: Global Existence for a Rare Gas in an Infinite Vacuum},
Commun. Math. Phys. \textbf{95}, 217-226 (1984).


\bibitem{KS}
S. Kaniel, M. Shinbrot, \textit{The Boltzmann Equation: Uniqueness and Local Existence}, Comm. Math. Phys. \textbf{58}, 65-84 (1978)


\bibitem{Lan}
O.E. Lanford, \textit{Time evolution of large classical systems}, Lect. Notes in Physics 38, J. Moser ed. (1975), 1–111, Springer Verlag


\bibitem{Nazarenko}
S. Nazarenko, \textit{Wave turbulence}, Lecture Notes in Physics, \textbf{825}. Springer, Heidelberg, 2011

\bibitem{PaTaVe25} N. Pavlovi\'c, M. Taskovi\'c, L. Velasco, \textit{Inhomogeneous six-wave kinetic equation in exponentially weighted $L^\infty$ spaces}, 	arXiv:2501.10565 (2024) 


\bibitem{Pe}
R. Peierls, \textit{Zur kinetischen theorie der W\"{a}rmeleitung in kristallen}, Annalen der Physik \textbf{395} (1929) 1055-1101




\end{thebibliography}
\end{document}